\numberwithin{equation}{section}
\theoremstyle{plain}
\newtheorem{Th}{Theorem}[section]
\newtheorem{Lemma}[Th]{Lemma}
\newtheorem{Cor}[Th]{Corollary}
\newtheorem{Prop}[Th]{Proposition}
 \theoremstyle{definition}
\newtheorem{Def}[Th]{Definition}
\newtheorem{Conj}[Th]{Conjecture}
\newtheorem{Rem}[Th]{Remark}
\newtheorem{?}[Th]{Problem}
\newtheorem{Ex}[Th]{Example}
\newcommand*{\defeq}{\mathrel{\vcenter{\baselineskip0.5ex \lineskiplimit0pt
                     \hbox{\scriptsize.}\hbox{\scriptsize.}}}%
                     =}
\newcommand{\forceindent}{\leavevmode{\parindent=2em\indent}}
\begin{document}

\title{The Number of $k$-Cycles in a Family of Restricted Permutations}

\author[E. Ozel]{Enes Ozel}

\address{University of Southern California \\ Department of Mathematics \\
3620 S. Vermont Avenue \\ Kaprielian Hall 104 \\ Los Angeles, CA} %\&  E\"{o}tv\"{o}s Lor\'{a}nd University \\ Department of Computer 
%Science \\ H-1117 Budapest
%\\ P\'{a}zm\'{a}ny P\'{e}ter s\'{e}t\'{a}ny 1/C \\ Hungary} 

\email{enozel@gmail.com}

% \subjclass[2010]{Primary: 05A05. Secondary: 05A19}

\subjclass[2010]{Primary: 05A05. Secondary: 05A19, 60C05}

 \keywords{permutations with restricted positions, cycle index, central limit theorem, compositions}

\begin{abstract}
In this paper we study different restrictions imposed over the set of permutations of size $n$, $S_n$, and for specific classes of restrictions study the cycle structure of the corresponding permutations. Specifically, we prove that for any fixed positive integer $k$, the number of $k$-cycles of a randomly chosen permutation $\pi\in S_n$ with the restriction ``$\pi(i) \geq i-1$ for $i\in\{2,...,n\}$'' has a Normal asymptotic distribution. We further prove that this result translates into CLTs regarding multiplicities of fixed-size parts of a uniformly selected composition of $n$.
\end{abstract}

\maketitle

%\vspace{-0.02in}
\section{Introduction}

\forceindent Permutations with restricted positions are studied under many different contexts. Hanlon \cite{Hanlon} studies Markov chains defined on permutations that adhere to one-sided restrictions. Klove \cite{Klove} and Baltic \cite{Baltic} work on permutations with limited displacement and provide the generating function for number of permutations in some cases. This type of restrictions are used in error-correcting codes. Efron and Petrosian \cite{EfronPetrosian1} use independence tests based on restricted permutations to analyze truncated astronomical data. Many more examples can be found in the paper by Diaconis, Graham and Holmes \cite{DiaconisGrahamHolmes}. They provide a wide survey of permutations with restricted positions and applications. 

\forceindent One way to answer the question ``How does a typical restricted permutation look like?'' is through understanding the distribution of its cycles, given that we chose the restricted permutation uniformly random. In this paper we study the cycle structures of specific families of restricted permutations and provide the asymptotic distribution for the number of cycles of fixed sizes for these permutations. Throughout this paper, we let $C_{n,k}(\pi)$ to be the number of $k$-cycles of the permutation $\pi\in S_n$. We will investigate the random variable $C_{n,k}(\pi)$, where we assume $\pi$ is a random permutation, chosen uniformly from the set of permutations adhering to the imposed restriction.

\forceindent The most general way to define restrictions to be imposed upon the symmetric group, $S_n$, is by a zero-one matrix. Let $M$ be an $n\times n$ matrix such that $M_{ij} = 1$ if the assignment $\pi (i) = j$ is allowed, and $M_{ij} = 0$ otherwise. For example, a matrix with ones everywhere except the main diagonal would result in the set of derangements, i.e., permutations with no fixed points.

\forceindent We define $S_M$ to be the set of permutations in accordance with the restriction matrix $M$, namely

\[
S_M = \{ \pi\in S_n : \prod_{i=1}^n M_{i \pi(i)} = 1 \}.
\]

Number of permutations in $S_M$ is given by the \emph{permanent} of $M$, 

\[
|S_M| = Per(M) = \sum_{\pi\in S_n} \prod_{i=1}^n M_{i\pi(i)}.
\]

\forceindent Permanent of a matrix is similar to the determinant, without the negative signs at every other iteration. Calculation of the permanent of a zero-one matrix is a widely studied problem, which was introduced by Valiant as the first example of a $\#$-P complete problem in \cite{Valiant}. The fastest algorithm, of $\mathcal{O}(n 2^n)$, that calculates the permanent of a zero-one matrix exactly is due to Ryser. Jerrum, Sinclair and Vigoda \cite{JerrumSinclairVigoda} provide a fully polynomial randomized approximation scheme to compute the permanent to any degree of approximation. For a survey of permanents see Diaconis et al \cite{DiaconisGrahamHolmes}. Permanents allow many calculations other than the number of restricted permutations, such as different moments of number of cycles of a fixed size.

% Hanlon's regular permutations and vector restrictions

\forceindent A particular case of restrictions can be described as ``one-sided''. One-sided restrictions are in the form of lower-bounds, i.e., $\pi(i) > j$. Such restrictions correspond to matrices with having zeroes only at their lower-diagonal part. Hanlon \cite{Hanlon} provides a brief survey of permutations with one-sided restrictions and studies the random transposition walk on these permutations.

\forceindent Hanlon calls the permutations with one-sided restrictions as ``$b$-regular permutations'', where $b$ is a vector of $n$ non-decreasing, positive integers. Then, restrictions are expressed as $\pi(i) \geq b_i$ for all $i$. One immediate result is that $b_i \leq i$ in order for the restriction to be nontrivial. Thinking in terms of the corresponding restriction matrix, the entries of ones on the $i$th row start at column $b_i$.We will denote the set of $b$-regular permutations as $S_b$. Here the dimension will be denoted implicitly through the vector $b$. The number of such permutations, as well as the generating functions for the number of cycles and number of inversions are well-known and are provided in \cite{Hanlon}. We will make use of the following formula for the number of $b$-regular permutations 

\begin{equation}\label{restvecformula}
|S_b| = \prod_{i=1}^n (1 + i - b_i)
\end{equation}
 
\noindent to show the following moment calculations on the number fixed points.

% Fixed point and transposition expected values and variances

%\begin{Prop}\label{meanformula}
%Let $\pi$ be a randomly chosen $b$-regular permutation. Then 
%$$\mathbb{E}[C_{n,1}(\pi)] = \frac{1}{|S_b|} \,\,\, \underset{k=1}{\overset{n}{\sum}} |S_{b,k}|$$
%\noindent and
%$$\mathbb{V}ar(C_{n,1}(\pi)) = \frac{1}{|S_b|} \,\,\, \sum_{k=1}^n \Big[ S_{b,k} (S_b - S_{b,k}) \Big] + 2  \sum_{i < j} \Big[ S_{b,i,j} - S_{b,i} S_{b,j} \Big],$$
%where $\left|S_{b,k}\right| = \underset{j=1}{\overset{k-1}{\prod}} \left(1+j-b_j\right) \times \underset{j=k+1}{\overset{n}{\prod}} \left(j-\left(b_j - \mathbbm{1}(b_j > k)\right)\right)$ and 
%\begin{equation*}%\label{Sbij}
%  \begin{alignedat}{2}
%    \Big| S_{b,i,j} \Big| = \prod\limits_{m=1}^{i-1} \Big(1+m-b_m \Big) &\times \prod\limits_{m=i+1}^{j-1} \Big(1+m- (b_{m+1} - \mathbbm{1}(b_{m+1}>i))  \Big) \\
%    & \times \prod\limits_{m=j+1}^{n} \Big(1+m- (b_{m+2} - \mathbbm{1}(b_{m+2}>i)) - \mathbbm{1}(b_{m+2}>j)) \Big).
%  \end{alignedat}
%\end{equation*}
%\end{Prop}

\forceindent We are interested in certain restriction vectors, namely when the restriction is $\pi(i) > i-1$. We study the cycle structure for these permutations, specifically, we find the local dependence relations of the cycle indicators. Using a local dependence result based on Stein's method, we prove central limit theorems for the number of $k$-cycles, for any $k$. The following is our main result.

%\vspace{0.2in}

\begin{Th}\label{r=2clt} %bunu Wasserstein metrikle yaz, Kolmorov'a selam cak
Consider the set of permutations $\pi$ such that $\pi(i) \geq i-1$ for $i\in\{2,...,n\}$. The corresponding restriction vector is $b_2 = [1, 1, 2, 3, \ldots, n-1]$. For any positive integer $k\leq n$, the number of $k$-cycles of a randomly selected permutation adhering to the above restriction, $C_{n,k}$ has a Normal limiting distribution. Specifically, as $n\to\infty$

$$
P\left( \frac{C_{n,k} - \mu_{n,k}}{\sigma_{n,k}} < t \right) \to \frac{1}{\sqrt{2\pi}} \int_{-\infty}^t \! e^{-x^2/2} \, \mathrm{d}x,
$$
\end{Th}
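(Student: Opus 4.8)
The plan is to exploit a rigid structural description of these permutations, reduce $C_{n,k}$ to a sum of boundedly dependent indicators, and finish with a Stein's-method central limit theorem for locally dependent variables. First I would prove a structure lemma: if $\pi(i)\ge i-1$ for all $i\in\{2,\dots,n\}$, then every cycle of $\pi$ is a block of consecutive integers $\{a,a+1,\dots,b\}$ on which $\pi$ acts by $\pi(a)=b$ and $\pi(i)=i-1$ for $a<i\le b$. The point is that $\pi(i)\ge i-1$ never lets a value drop by more than one, which forces each cycle to occupy an interval and to act on it as the single admissible ``decreasing'' cycle. Such a $\pi$ is therefore determined by how one cuts $\{1,\dots,n\}$ into consecutive blocks, yielding the bijection with compositions of $n$ promised in the abstract and, via \eqref{restvecformula}, the count $|S_{b_2}|=2^{\,n-1}$. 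Under this correspondence $C_{n,k}$ is precisely the number of blocks (composition parts) of size $k$.

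Next I would encode the uniform random block structure by independent fair coins $X_1,\dots,X_{n-1}$, where a cut is placed after position $i$ exactly when $X_i=1$, together with the boundary convention $X_0=X_n=1$. A size-$k$ block begins at $j$ precisely when $X_{j-1}=1$, $X_j=\cdots=X_{j+k-2}=0$, and $X_{j+k-1}=1$; writing $Z_j=\mathbf{1}[\text{a size-}k\text{ block starts at }j]$ gives $C_{n,k}=\sum_{j=1}^{\,n-k+1}Z_j$. Each $Z_j$ depends only on the $k+1$ coins in a window about $j$, so $Z_j$ and $Z_{j'}$ are independent whenever $|j-j'|>k$; this is exactly the $(k+1)$-local dependence of the cycle indicators, and it supplies dependency neighborhoods of bounded size $O(k)$.

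I would then compute the first two moments. For interior $j$ one has $E[Z_j]=2^{-(k+1)}$, while the two boundary blocks (starting at $1$ or ending at $n$) contribute $2^{-k}$ because one constraining coin is forced, so $\mu_{n,k}=E[C_{n,k}]=(1+o(1))\,n/2^{k+1}$. By $(k+1)$-dependence the variance decomposes as $\sigma_{n,k}^2=\sum_j\operatorname{Var}(Z_j)+2\sum_{0<|j-j'|\le k}\operatorname{Cov}(Z_j,Z_{j'})$, in which only windows that overlap contribute; this again grows linearly in $n$, and the real task is to verify that its leading coefficient is strictly positive so that $\sigma_{n,k}\to\infty$ at rate $\sqrt{n}$. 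Finally, with $W=(C_{n,k}-\mu_{n,k})/\sigma_{n,k}$ written as a normalized sum of bounded indicators with bounded dependency neighborhoods, I would invoke the local-dependence central limit theorem obtained from Stein's method, whose Kolmogorov bound is of order (number of terms)$/\sigma_{n,k}^3=O(n/n^{3/2})=O(n^{-1/2})\to 0$, giving the claimed convergence to $\Phi$.

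The main obstacle is the third step: establishing the strict positivity of the leading variance coefficient. The covariances between overlapping windows carry both signs, and one must rule out an exact cancellation that would make $\sigma_{n,k}^2$ grow slower than linearly (or even degenerate), since the Stein bound is useless without $\sigma_{n,k}\to\infty$ at the correct rate. Carefully accounting for the overlap structure of the size-$k$ windows, and showing the resulting constant is positive for every fixed $k$, is where the genuine work lies; the structure lemma and the Stein estimate are then comparatively routine.
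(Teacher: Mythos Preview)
Your proposal is correct and follows the same overall architecture as the paper: the structure lemma (cycles are consecutive intervals), the bijection with compositions of $n$, the reduction of $C_{n,k}$ to a sum of locally dependent indicators, and the Stein's-method local-dependence CLT with an $O(n^{-1/2})$ bound. The one genuine methodological difference is in how the moments are obtained. The paper builds the cycle index for $S_{b_2}$ via the composition bijection and then extracts the exact mean and variance by differentiating the generating function; this yields the closed form $\sigma_{n,k}^2=\bigl((2^{k+1}-2k+3)n+\cdots\bigr)/4^{k+1}$, from which positivity of the leading coefficient is immediate. Your coin encoding $X_1,\dots,X_{n-1}$ is more elementary and arguably more transparent for this particular problem: it makes the $(k{+}1)$-dependence obvious and in fact lets you compute the same leading coefficient directly, since for $0<|j-j'|<k$ the events are mutually exclusive so $\operatorname{Cov}(Z_j,Z_{j'})=-4^{-(k+1)}$, while for $|j-j'|=k$ the windows share exactly one coin and $\operatorname{Cov}(Z_j,Z_{j'})=+4^{-(k+1)}$, giving the bulk variance rate $(2^{k+1}-2k+3)/4^{k+1}>0$. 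So the ``main obstacle'' you flag is in fact a short explicit computation in your own framework, and the paper's cycle-index machinery, while more systematic, is not strictly needed here.
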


where

$$
\mu_{n,k} = \frac{n-k+3}{2^{k+1}} \,\,\,\, \mbox{ and } \,\,\,\, \sigma^2_{n,k} = \frac{(2^{k+1}-2k+3)n + 3k(k-4) + (3-k)2^{k+1}+ 5}{4^{k+1}}.
$$

%\vspace{0.2in}

\forceindent We will actually provide an upper bound to the Wasserstein distance between distribution functions of the above standardized version of $C_{n,k}$ and standard Normal distribution, which diminishes as $n$ increases. Theorem $\ref{r=2clt}$ follows as an immediate corollary.

%more about compositions? 

\forceindent We present a useful bijection that can be set up between these permutations and compositions of $n$. An ordered list of positive integers is called a composition of $n$, if the numbers in the list add up to $n$. In simple terms, compositions are partitions for which the order of parts matter. We use this bijection to calculate the cycle index of $b_2$-regular permutations and certain properties of compositions, which to our knowledge are proven for the first time.

%\newpage

\section{Restricted Permutations and Number of $k$-Cycles}

\forceindent In this section we investigate vector restrictions in detail and step by step prove Theorem \ref{r=2clt}. Specifically, in Section \ref{permcalc} we use permanents to calculate the expected value and variance of number of fixed points for any vector restricted permutation.  In Section \ref{bijcyc} we begin the discussion of our main subject, $b_2$-regular permutations introduced above. Here we start our investigation about how the cycles of these permutations look like, then we present the bijection between them and the compositions of $n$. After deriving the cycle index, using it we calculate the moments in the central limit theorem. We are indebted to Prof. Jason Fulman\footnote{University of Southern California, Department of Mathematics} for the idea of using the cycle index and related derivations. We study the local dependence structure for the indicator random variables, whose sum gives the subject random variables $C_{n,k}$, in Section \ref{localdep}. We then proceed to combining these ingredients, describing the Stein's method set up we use and proving Theorem \ref{r=2clt} in Section \ref{steinclt}. We discuss how these results translate to compositions. Lastly, we provide ways to generalize the results presented in this work, potential results and differences with the case we proved.

\subsection{Permanent Calculations}\label{permcalc}% we use permanents to calculate the expected value and variance of number of fixed points for any vector restricted permutation

\forceindent Finding the asymptotic distribution of the number of $k$-cycles for a randomly selected permutation in $S_n$ is a classical problem, for which Feller \cite{Feller} is the most accessible reference. The number of $k$-cycles converges to a Poisson random variable with mean $1/k$. Another great reference is Arratia and Tavare \cite{ArratiaTavare}; they provide an upper bound to the total variation distance between the distributions of $(C_{n,1}, C_{n,2},\ldots )$ and $(L_1, L_2,\ldots )$, where $L_i$ are independent Poisson random variables mentioned above, with mean $1/i$.

%the general problem of identifying which cycle is allowed
\forceindent Differently than the classical unrestricted case, we first need to identify which cycles are allowed and which are not due to the restrictions. Depending on the restrictions the distributions of the number of $k$-cycles might change critically. For example, in the case of $M$ being the matrix with $1$'s everywhere except the main subdiagonal, there could be no fixed points, where without the restriction there would be around one fixed point on average. Whatever the imposed restriction might be, we cannot ignore these changes. One way to study the problem is through expressing the number of $k$-cycles as a sum of indicators, i.e., 

\[
C_{n,k}(\pi) = \sum_{\mbox{c is an allowed k-cycle}} \mathbbm{1}\{\mbox{c is a cycle of } \pi\}.
\]

%matrix conditionings, remark on conditionings resulting in similar but smaller vectors, an iterative approach

\forceindent For a given vector $b$ we calculate the expected value and variance of the number of fixed points $C_{n,1}(\pi)$, where $\pi$ is a randomly chosen $b$-regular permutation. Using the indicator representation of the number of fixed points, $C_{n,1}(\pi) = \sum_{i=1}^n \mathbbm{1}\{\pi(i) = i\}$, we need to calculate probabilities of the form $P(\pi(i) = i)$, which is equivalent to counting number of $b$-regular permutations with ``$\pi(i) = i$'' happening. We calculate the permanents of the matrices with the corresponding conditioning. For instance, consider the restriction vector $b = [1, 1, 2, 4, 4]$. The corresponding restriction matrix is

\[
M = \begin{bmatrix}
    1 & 1 & 1 & 1 & 1 \\
    1 & 1 & 1 & 1 & 1 \\
    0 & 1 & 1 & 1 & 1 \\
    0 & 0 & 0 & 1 & 1 \\
    0 & 0 & 0 & 1 & 1 \\
\end{bmatrix}.
\]

\forceindent If we were to condition on the event that $\{\pi(2)=2\}$, then as we prevent $\pi(2)$ to have any value other than $2$ and vice versa, the corresponding matrix would have a $1$ only at the position $(2,2)$ along the column and row number two:

\[
M^{(2)} = \begin{bmatrix}
    1 & 0 & 1 & 1 & 1 \\
    0 & 1 & 0 & 0 & 0 \\
    0 & 0 & 1 & 1 & 1 \\
    0 & 0 & 0 & 1 & 1 \\
    0 & 0 & 0 & 1 & 1 \\
\end{bmatrix}.
\]

\forceindent Permanent of $M^{(2)}$ is equal to the permanent of the smaller matrix where the row and column two are deleted:

\[
M^{(2)^*} = \begin{bmatrix}
    1 & 1 & 1 & 1 \\
    0 & 1 & 1 & 1 \\
    0 & 0 & 1 & 1 \\
    0 & 0 & 1 & 1 \\
\end{bmatrix}.
\]

\forceindent This new matrix corresponds to a new and smaller restriction vector. We can imagine this vector as a restriction to be imposed on the permutations of the four numbers $\{1, 3, 4, 5\}$. Permanent of this smaller matrix is equal to the formula (\ref{restvecformula}) applied to the new restriction vector, $b^{(2)} = [1, 2, 3, 3]$. To understand how the conditioned event ``$\pi(2) = 2$'' determines the corresponding vector $b^{(2)}$, we observe the fact that deleting the column and row $2$ would decrease the number of ones any row that would have $b_i \leq 2$. Otherwise they are not affected. So we have the following remark.

\begin{Rem}\label{vectorreduction}
The number of $b$-regular permutations with $\pi(i) = i$ equals to the number of $b^{(i)}$-regular permutations where $b^{(i)}$ is the vector obtained by erasing $i$th entry of $b$ and decreasing the entries $j > i$ given that $b_j^{(i)} > i$, otherwise leaving them as they are.
\end{Rem}

\forceindent This remark immediately leads to the calculation of the expected number of fixed points and its variance. 

%fixed point and transposition results

\begin{Prop}\label{meanformula}
Let $\pi$ be a randomly chosen $b$-regular permutation. Then 
$$\mathbb{E}[C_{n,1}(\pi)] = \frac{1}{|S_b|} \,\,\, \underset{k=1}{\overset{n}{\sum}} |S_{b,k}|$$
\noindent and
$$\mathbb{V}ar(C_{n,1}(\pi)) = \frac{1}{|S_b|} \,\,\, \sum_{k=1}^n \Big[ S_{b,k} (S_b - S_{b,k}) \Big] + 2  \sum_{i < j} \Big[ S_{b,i,j} - S_{b,i} S_{b,j} \Big],$$
where $\left|S_{b,k}\right| = \underset{j=1}{\overset{k-1}{\prod}} \left(1+j-b_j\right) \times \underset{j=k+1}{\overset{n}{\prod}} \left(j-\left(b_j - \mathbbm{1}(b_j > k)\right)\right)$ and 
\begin{equation*}%\label{Sbij}
  \begin{alignedat}{2}
    \Big| S_{b,i,j} \Big| = \prod\limits_{m=1}^{i-1} \Big(1+m-b_m \Big) &\times \prod\limits_{m=i+1}^{j-1} \Big(1+m- (b_{m+1} - \mathbbm{1}(b_{m+1}>i))  \Big) \\
    & \times \prod\limits_{m=j+1}^{n} \Big(1+m- (b_{m+2} - \mathbbm{1}(b_{m+2}>i)) - \mathbbm{1}(b_{m+2}>j)) \Big).
  \end{alignedat}
\end{equation*}
\end{Prop}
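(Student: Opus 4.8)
The plan is to write the number of fixed points as a sum of indicators, $C_{n,1}(\pi)=\sum_{i=1}^n \mathbbm{1}\{\pi(i)=i\}$, and to extract both moments purely from counting $b$-regular permutations with prescribed fixed points. Every such count will be reduced to a smaller vector restriction via Remark \ref{vectorreduction} and then evaluated by the product formula (\ref{restvecformula}). Writing $p_i \defeq P(\pi(i)=i)=|S_{b,i}|/|S_b|$, where $|S_{b,i}|$ is the number of $b$-regular permutations fixing $i$, the whole argument comes down to producing closed forms for the single count $|S_{b,i}|$ and the joint count $|S_{b,i,j}|$.

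For the mean, linearity of expectation gives $\mathbb{E}[C_{n,1}(\pi)]=\sum_{i=1}^n p_i = \frac{1}{|S_b|}\sum_{i=1}^n |S_{b,i}|$. By Remark \ref{vectorreduction}, $|S_{b,i}|=|S_{b^{(i)}}|$ for the reduced vector $b^{(i)}$ of length $n-1$, whose entries coincide with $b$ below position $i$ and equal $b_{m+1}-\mathbbm{1}(b_{m+1}>i)$ at reduced position $m\ge i$. Substituting $b^{(i)}$ into (\ref{restvecformula}) and re-indexing the tail by $j=m+1$ reproduces exactly the displayed expression for $|S_{b,k}|$, so summing over $i$ and dividing by $|S_b|$ yields the mean formula.

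For the variance I would use the standard decomposition $\mathrm{Var}\big(\sum_i \mathbbm{1}\{\pi(i)=i\}\big)=\sum_i \mathrm{Var}(\mathbbm{1}\{\pi(i)=i\})+2\sum_{i<j}\mathrm{Cov}(\mathbbm{1}\{\pi(i)=i\},\mathbbm{1}\{\pi(j)=j\})$. The diagonal terms are Bernoulli variances $p_i(1-p_i)=|S_{b,i}|(|S_b|-|S_{b,i}|)/|S_b|^2$ using only the single-point counts already found, and they assemble into the first sum of the statement. Each covariance is $P(\pi(i)=i,\pi(j)=j)-p_ip_j$, so it equals $(|S_{b,i,j}|-|S_{b,i}||S_{b,j}|/|S_b|)/|S_b|$, which is the summand $|S_{b,i,j}|-|S_{b,i}||S_{b,j}|$ appearing in the second sum. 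Thus the only new ingredient needed is the doubly-conditioned count $|S_{b,i,j}|$.

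The crux, and the step I expect to be the main obstacle, is evaluating $|S_{b,i,j}|$ for $i<j$ by iterating Remark \ref{vectorreduction} twice. Fixing $\pi(i)=i$ first reduces to $b^{(i)}$ on $n-1$ elements; in that reduced system element $j$ has slid down to position $j-1$ and the requirement $\pi(j)=j$ becomes a fixed point there, so a second reduction at reduced position $j-1$ gives $|S_{b,i,j}|=|S_{b^{(i,j)}}|$. Splitting the index range of $b^{(i,j)}$ into the three regions lying before $i$, strictly between the two deleted positions, and beyond $j$, and then applying (\ref{restvecformula}), produces the three product factors in the statement (up to the obvious re-indexing of each block). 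The delicate part is the bookkeeping of the two superimposed shifts: one must unwind the first shift to show that the second reduction's indicator $\mathbbm{1}(b^{(i)}_{m+1}>j-1)$ collapses to $\mathbbm{1}(b_{m+2}>j)$ — a short case check on whether $b_{m+2}>i$ — so that the corrections from fixing $i$ and from fixing $j$ both surface cleanly in the tail factor. Managing these shifts and the exact product ranges is where the computation is most error-prone, and verifying them carefully is what completes the proof.
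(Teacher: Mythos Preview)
Your proposal is correct and follows exactly the paper's approach: write $C_{n,1}$ as a sum of indicators, apply Remark~\ref{vectorreduction} once for $|S_{b,k}|$ and twice for $|S_{b,i,j}|$, then invoke linearity of expectation and the covariance decomposition for the variance. You are in fact considerably more explicit than the paper's one-paragraph proof, particularly in tracking how the two superimposed shifts collapse in the tail factor of $|S_{b,i,j}|$; the paper leaves all of that bookkeeping to the reader.
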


\begin{proof}
$S_b$ was the number of $b$-regular permutations, given in the formula (\ref{restvecformula}). By Remark \ref{vectorreduction} the number of $b$-regular permutations where $k$ is a fixed point is $|S_{b,k}|$ and the number of $b$-regular permutations with $i$ and $j$ are fixed points is $|S_{b,i,j}|$, using the remark twice. The results follow from the linearity of expected value and covariance formula for the variance of sum of indicators.
\end{proof}

%general discussion about the inflexibility of the method in terms of cost of calculations

\forceindent This method is applicable to other cycle types as well, granted that we know which of the cycles of that type are allowed. For $b$-regular permutations, any coordinate might be a fixed point by the assumption $b_i \leq i$. But for transpositions and bigger cycles this identification is necessary. Furthermore, conditioning on a $k$-cycle will require $k$ indicators to be subtracted and thus calculating higher moments will become more and more expensive. We will sacrifice generality for utility and use cycle index for a specific family of $b$-regular permutations.

\subsection{Hypercube Permutations and Cycle Index}\label{bijcyc}%include composition results here

%Hypercube permutations, motivation and their cycles

\forceindent For the rest of this paper, we will be mainly interested in a special form of vector restrictions, namely, when each coordinate has to be mapped to at least one coordinate below itself. This restriction may be expressed as ``$\pi(i) > i-1$'', for $i= 2, 3, \ldots, n$, or, through the restriction vector $b_2 = [1, 1, 2, 3, \ldots, n-1]$. The corresponding restriction matrix (for $n=5$) is

\[
\begin{bmatrix}
    1 & 1 & 1 & 1 & 1 \\
    1 & 1 & 1 & 1 & 1 \\
    0 & 1 & 1 & 1 & 1 \\
    0 & 0 & 1 & 1 & 1 \\
    0 & 0 & 0 & 1 & 1 \\
\end{bmatrix}.
\]

\vspace{0.1in}

\forceindent This matrix has one nonzero lower subdiagonal. We would like to study this form of restriction, where there are a fixed number of nonzero lower subdiagonals. We believe the ideas for the one lower subdiagonal restriction can be generalized, with effort, to any fixed number of lower subdiagonals.

\forceindent Using formula (\ref{restvecformula}) we see that there are $2^{n-1}$ $b_2$-regular permutations. In \cite{Hanlon} Hanlon constructs a bijection between $b_2$-regular permutations and the vertices of the hypercube. This bijection preserves the random walk structure, in the sense that any two $b_2$-regular permutations that are one transposition away from each other are mapped to two hypercube vertices that are adjacent. Similarly, we construct a bijection from $b_2$-regular permutations to the compositions of $n$. This bijection has the property of matching not only permutations to compositions, but also cycles of any size in the permutation to parts of the same size in the composition. This allows us to construct the cycle index of these permutations using properties of compositions.

\forceindent Before looking at the cycle index construction, we would like to have a look at the cycles themselves. It turns out that cycles of $b_2$ regular permutations have a very specific form.

\begin{Rem}\label{cycleremark}
Let $\pi$ be a $b_2$- regular permutation, then any $k$-cycle of $\pi$ will be of the form $(m_k m_{k-1} m_{k-2} \ldots m_2 m_1)$, when written in the cycle notation. Here $m_i$ is the $i$th greatest number within the cycle. Further, $m_i = m_k + k - i$, or, equivalently, $m_i$ are increasing, consecutive numbers.
\end{Rem}

\forceindent The reason for this follows from the restriction $\pi(i) > i-1$, i.e., the fact that $\pi(i)$ has to be mapped to at least one coordinate below. Say $m_k$ is the largest entry within the $k$-cycle, then it has to be mapped to $m_k-1$ unless it is a fixed point, i.e., $k=1$. Similarly, $m_k-1$ has to be mapped to $m_k-2$, and following iteratively we reach down to $m_1$, which has to be mapped to $m_k$, which is allowed as there are no upper bound restrictions. This form gives the clue for the bijection, the largest entries so far will be breaking points.

%bijection and examples

\begin{Def}
Let $\pi\in S_n$, then $i\in\{1, 2, \ldots, n\}$ is called a \emph{record position} of $\pi$, if $\pi(i) > \pi(j)$ for all $j < i$. Further, $\pi(i)$ is called a \emph{record value}.
\end{Def}

By this definition, $i=1$ is always a record position and this is inherently equivalent to the fact that every permutation has at least one cycle. Now we present the bijection.

\begin{Lemma}\label{bijection}
There is a bijection between the set of $b_2$-regular permutations and the set of compositions of $n$, which matches the cycle sizes of permutations to part sizes compositions.
\end{Lemma}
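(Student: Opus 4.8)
```latex
\textbf{Proof proposal.}

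The plan is to build the bijection explicitly by reading off the \emph{record positions} of a $b_2$-regular permutation and translating the gaps between consecutive records into the parts of a composition. Remark \ref{cycleremark} is the key structural input: it tells us that every cycle of a $b_2$-regular permutation is a block of consecutive integers $\{m_1, m_1+1, \ldots, m_1+k-1\}$ acted on by the single ``descending'' cycle $(m_k\, m_{k-1}\, \cdots\, m_1)$. Consequently a $b_2$-regular permutation is \emph{completely determined} by the partition of $\{1,2,\ldots,n\}$ into consecutive blocks, since each block admits exactly one admissible cycle. My first step would be to make this precise: I would show that the admissible cycles of $\pi$ are exactly the maximal intervals $[a,b]\subseteq\{1,\ldots,n\}$ on which $\pi$ acts, and that the left endpoints of these intervals are precisely the record positions of $\pi$. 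A block starting at position $a$ means $\pi^{-1}(a)=a$ only if the block is a singleton, and otherwise the largest element of the block is a record value; tracking where new record values appear pins down the block boundaries.

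Given this, I would define the map $\Phi$ from a $b_2$-regular permutation $\pi$ to the composition $(c_1,c_2,\ldots,c_r)$ of $n$, where the $c_i$ are the sizes of the consecutive-integer blocks, read from left to right. Since the blocks partition $\{1,\ldots,n\}$ into intervals, their sizes form a composition of $n$, and by construction a $k$-cycle of $\pi$ corresponds to a block of size $k$, i.e.\ a part equal to $k$ in $\Phi(\pi)$. This establishes the cycle-size/part-size matching claimed in the statement. To finish, I would define the inverse map $\Psi$ sending a composition $(c_1,\ldots,c_r)$ to the permutation that places, on the $i$-th interval of length $c_i$, the unique descending cycle guaranteed by Remark \ref{cycleremark}, and verify $\Psi\circ\Phi=\mathrm{id}$ and $\Phi\circ\Psi=\mathrm{id}$. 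A clean sanity check is cardinality: the number of compositions of $n$ is $2^{n-1}$, which matches $|S_{b_2}|=2^{n-1}$ computed from formula (\ref{restvecformula}), so a bijection is at least numerically plausible before the explicit construction is verified.

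The step I expect to be the main obstacle is proving that the block decomposition is genuinely \emph{forced}, i.e.\ that no $b_2$-regular permutation can have a cycle that is \emph{not} a block of consecutive integers, and conversely that every interval partition yields a valid (and unique) permutation. Remark \ref{cycleremark} supplies the first half, but I would need to argue carefully that distinct cycles cannot ``interleave'' their index ranges: if one cycle occupies $\{a,\ldots,b\}$ and another occupies a range that straddles $b$, the restriction $\pi(i)\geq i-1$ combined with the descending-cycle form would be violated at the overlap. The argument hinges on the observation that once the largest element of a cycle is fixed at some position $m_k$, every smaller element of that cycle is forced downward by one, so the cycle cannot reach above $m_k$ or leave a gap below $m_1$; this locks each cycle into a contiguous interval and prevents any two cycles from sharing or splitting an interval. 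Making this rigidity fully rigorous — rather than relying on the informal iterative description in Remark \ref{cycleremark} — is where the real content of the proof lies.
```
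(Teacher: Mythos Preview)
Your proposal is correct and follows essentially the same route as the paper: both arguments read off the record positions of $\pi$, use Remark \ref{cycleremark} to identify each cycle with a block of consecutive integers beginning at a record position, and send the block lengths to the parts of a composition. The only cosmetic difference is that the paper proves injectivity of the map and then invokes the cardinality match $|S_{b_2}|=2^{n-1}=\#\{\text{compositions of }n\}$ to conclude bijectivity, whereas you plan to write down the inverse $\Psi$ explicitly; either finishes the job, and your worry about ``interleaving'' cycles is already dispatched by Remark \ref{cycleremark} together with the fact that cycles partition $\{1,\ldots,n\}$.
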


\begin{proof}
Let $\pi\in S_{b_2}$, then we consider the record positions of $\pi$. Starting at each record position, we proceed till the next record position to construct a part of of the composition. To give an example, if the record positions for $\pi\in S_{10}$ are $1, 2, 5, 6$, then the corresponding composition is $(1,3,1,5)$. We will prove that this mapping is one-to-one and as the number of compositions of $n$ is $2^{n-1}$ as well, the mapping will indeed be a bijection.

By Remark \ref{cycleremark} we can deduce that any record position will start a new cycle that will proceed till the next record position. This is as a record position is the greatest value till the next record position, thus the cycle it belongs to will start with it, and it will end with the next record position and a new cycle will begin. 

Lets assume we have two permutations $\pi_1, \pi_2 \in S_{b_2}$, and further, they are mapped to the same composition of $n$. By the definition of our mapping, these permutations have the same record positions and so the same cycle structure. The record values, too, must be the same, otherwise the permutation with the higher record value will cease one of its latter record positions from being a record position. $\pi_1$ and $\pi_2$ have cycles of identical length, with identical maximum value, so by Remark \ref{cycleremark} they have the same cycles, or, equivalently, $\pi_1 = \pi_2$. This proves that this mapping is one to one and therefore a bijection. Lastly, by the construction and Remark \ref{cycleremark} $k$-cycle always corresponds to a $k$-part as desired.

\end{proof}

%cycle index theorem and calculations of the first few moments

\forceindent We will use the following Lemma in the calculations for cycle index.

\begin{Lemma}\label{lemmo}
\[
2 + \sum_{n=0}^{\infty} \frac{u^{n+1}}{2^n} \sum_{\pi\in S_{b_2}} \frac{\prod_i x_i^{n_i(\pi)}}{\left(\sum n_i(\pi)\right)!} = 2 \prod_{i=1}^{\infty} e^{x_i\left(\frac{u}{2}\right)^i},
\]

where $n_i(\pi)$ is the number of $i$-cycles of $\pi$.
\end{Lemma}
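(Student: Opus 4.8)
The plan is to push the entire identity through the bijection of Lemma \ref{bijection} and reduce it to a routine manipulation of formal power series. First I would pin down the meaning of the inner sum: for the term indexed by $n$, the symbol $S_{b_2}$ denotes the $b_2$-regular permutations of $\{1,\ldots,n+1\}$, of which there are exactly $2^n$ by (\ref{restvecformula}); this is why the weight $u^{n+1}/2^n$ appears, the factor $1/2^n$ being the uniform probability on that set. Under the bijection, a permutation $\pi$ with $n_i(\pi)$ cycles of size $i$ corresponds to a composition $c$ of $N := n+1$ having exactly $n_i(\pi)$ parts equal to $i$; in particular $\sum_i n_i(\pi)$ equals the number of parts $\ell(c)$. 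Hence the inner sum becomes a sum over the compositions of $N$.

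The key combinatorial step is to group compositions of $N$ by their part-multiplicity vector $(m_1,m_2,\ldots)$, where $m_i$ records how many parts equal $i$ and $\sum_i i\,m_i = N$. The number of compositions realizing a given vector is the multinomial coefficient $\ell!/\prod_i m_i!$ with $\ell=\sum_i m_i$, since a composition is merely an ordering of its parts. The crucial observation is that this multinomial cancels precisely against the factor $1/(\sum_i n_i)! = 1/\ell!$ in the summand, leaving
\[
\sum_{\pi\in S_{b_2}}\frac{\prod_i x_i^{n_i(\pi)}}{\left(\sum_i n_i(\pi)\right)!} = \sum_{\substack{(m_i):\ \sum_i i\,m_i = N}} \prod_i \frac{x_i^{m_i}}{m_i!}.
\]

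It then remains to assemble the generating function. Writing $w=u/2$ and using $u^{n+1}/2^n = 2\,w^{\,n+1}$, the left-hand side minus the constant $2$ equals $2\sum_{N\ge 1} w^N \sum_{(m_i):\,\sum_i i m_i = N} \prod_i x_i^{m_i}/m_i!$. Interchanging the order of summation and grouping by the value of each $m_i$ turns the double sum into a product of exponential series,
\[
\sum_{N\ge 0} w^N \!\!\!\sum_{(m_i):\,\sum_i i m_i = N}\!\! \prod_i \frac{x_i^{m_i}}{m_i!} = \prod_{i=1}^{\infty}\sum_{m_i\ge 0}\frac{(x_i w^i)^{m_i}}{m_i!} = \prod_{i=1}^{\infty} e^{x_i (u/2)^i},
\]
where I have restored the $N=0$ term, namely the empty composition, contributing $1$. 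Subtracting that $N=0$ term removes exactly one unit, so the left-hand side equals $2 + 2\big(\prod_i e^{x_i(u/2)^i} - 1\big) = 2\prod_i e^{x_i(u/2)^i}$, which is the right-hand side.

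I expect the genuinely delicate points to be bookkeeping rather than conceptual: correctly reading the size shift $N=n+1$ hidden in the weight, and recognizing that the additive constant $2$ on the left is present precisely to reinstate the $N=0$ (empty-composition) term that the permutation sum omits. The cancellation of the multinomial coefficient against $1/\ell!$ is what converts the ordinary enumeration of compositions into the exponential-type sum needed to factor as a product over cycle sizes, and this is the conceptual heart of the argument; everything downstream is a formal power series identity with no convergence concerns.
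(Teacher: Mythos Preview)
Your proposal is correct and is essentially the paper's own proof: both invoke the bijection of Lemma~\ref{bijection}, count permutations with a prescribed cycle type by the multinomial $\ell!/\prod_i m_i!$, cancel it against the $1/\ell!$ in the summand, and identify the result as $\prod_i e^{x_i(u/2)^i}$. The only cosmetic difference is that the paper first establishes the cleaner identity $1+\sum_{n\ge 0} u^{n+1}\sum_{\pi}\prod_i x_i^{n_i(\pi)}/(\sum_i n_i(\pi))! = \prod_i e^{x_i u^i}$ and then substitutes $u\mapsto u/2$ and multiplies by $2$, whereas you work directly with $w=u/2$ and track the constant $2$ by reinstating the empty composition.
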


\begin{proof}

By Lemma \ref{bijection} we know cycles of $\pi\in S_{b_2}$ corresponds to parts of the mapped composition. This implies the number of $b_2$-regular permutations with $n_i$ $i$-cycles is $\frac{\left(\sum_i n_i\right)!}{\prod_i n_i!}$, under the assumption that $\sum_i i n_i = n$. Therefore

\begin{align*}
1 + \sum_{n=0}^{\infty} u^{n+1} \sum_{\pi\in S_{b_2}} \frac{\prod_i x_i^{n_i(\pi)}}{\left(\sum_i n_i(\pi)\right)!} &= \prod_{i=1}^{\infty} \sum_{n_i=0}^{\infty} \frac{(x_iu^i)^{n_i}}{n_i!} \\
&= \prod_{i=1}^{\infty} e^{x_iu^i}
\end{align*}

\noindent Replacing $u$ by $u/2$ and multiplying through by $2$ gives the desired equation.

\end{proof}

\forceindent Now we derive the cycle index for $b_2$ regular permutations.

\begin{Th}\label{cycleindex}

The cycle index for $S_{b_2}$ is given by

\[
\sum_{n=0}^{\infty} \frac{u^{n+1}}{2^n} \sum_{\pi\in S_{b_2}} \prod_i x_i^{n_i(\pi)} = 2 \sum_{k\geq 1} \left(\sum_{i\geq 1} x_i \left(\frac{u}{2}\right)^i \right)^k.
\]

\end{Th}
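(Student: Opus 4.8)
The plan is to derive the cycle index from Lemma \ref{lemmo} by stripping off the factorial $\left(\sum_i n_i(\pi)\right)!$ that sits in its denominator. The tool that accomplishes exactly this is the Gamma integral $\int_0^\infty e^{-t} t^m\,dt = m!$, which lets me cancel a factorial by integrating a suitably rescaled copy of the Lemma \ref{lemmo} identity against $e^{-t}\,dt$. Writing $Y = \sum_{i\geq 1} x_i (u/2)^i$ for brevity, the target right-hand side is $2\sum_{k\geq 1} Y^k$, so the geometric-series shape should emerge from turning the exponential $2\prod_i e^{x_i(u/2)^i} = 2e^{Y}$ into $\tfrac{2}{1-Y}$.

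First I would introduce an auxiliary variable $t$ and substitute $x_i \mapsto t x_i$ throughout Lemma \ref{lemmo}. Since each monomial $\prod_i x_i^{n_i(\pi)}$ is homogeneous of degree $\sum_i n_i(\pi)$ in the variables $x_i$, this substitution multiplies the generic summand on the left by $t^{\sum_i n_i(\pi)}$, while the right-hand side becomes $2\prod_i e^{t x_i (u/2)^i} = 2\,e^{tY}$. Next I would multiply both sides by $e^{-t}$ and integrate over $t\in(0,\infty)$, exchanging the integral with the summations. On the left the generic summand contributes $\tfrac{1}{(\sum_i n_i)!}\int_0^\infty e^{-t}t^{\sum_i n_i}\,dt = 1$, so the factorial cancels exactly, and the additive constant $2$ integrates to $2\int_0^\infty e^{-t}\,dt = 2$; what survives is precisely $2 + \sum_n \frac{u^{n+1}}{2^n}\sum_{\pi\in S_{b_2}}\prod_i x_i^{n_i(\pi)}$. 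On the right, $\int_0^\infty 2\,e^{-t}e^{tY}\,dt = \frac{2}{1-Y} = 2\sum_{k\geq 0} Y^k = 2 + 2\sum_{k\geq 1} Y^k$. Cancelling the common constant $2$ from both sides yields exactly the claimed identity.

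The step I expect to be the main obstacle, and the one needing care in the write-up, is justifying the interchange of integration and summation together with the convergence of $\int_0^\infty e^{-t(1-Y)}\,dt$, which naively demands $|Y|<1$. The clean remedy is to read the whole statement as an identity of formal power series in $u$: for each fixed power $u^{n+1}$ only finitely many cycle types contribute, because $\sum_i i\,n_i = n+1$ forces $\sum_i n_i \le n+1$, so the operation ``replace $t^m$ by $m!$'' is a well-defined, term-by-term linear transform and no genuine analytic convergence is invoked. With that reading the two paragraphs above become a rigorous manipulation of coefficients rather than of functions.

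As an independent consistency check I would also verify the identity directly from the composition bijection of Lemma \ref{bijection}, which both confirms the answer and clarifies the bookkeeping of the constants. A composition of a size-$(n+1)$ object into $k$ parts $(c_1,\dots,c_k)$ contributes $\prod_{j=1}^k x_{c_j}(u/2)^{c_j}$, and summing over all compositions of all sizes reproduces $\sum_{k\geq 1}\left(\sum_i x_i(u/2)^i\right)^k$; the leading factor of $2$ and the shift $n\mapsto n+1$ in the exponent of $u$ are then explained by there being $2^{n}$ permutations of size $n+1$, matching the weight $u^{n+1}/2^{n}$ in the outer sum. Agreement of this elementary computation with the integral-transform derivation is exactly the identity of Theorem \ref{cycleindex}.
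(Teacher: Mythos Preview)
Your proof is correct and runs parallel to the paper's, diverging only at the key technical step. Both arguments begin by substituting $x_i \mapsto t x_i$ in Lemma \ref{lemmo}, which tags each permutation by $t^{\sum_i n_i(\pi)}$. From there the paper differentiates $k$ times with respect to $t$ and sets $t=0$: on the left this isolates the permutations with exactly $k$ cycles (the $k!$ from differentiation cancelling the $1/k!$ in the denominator), and on the right it yields $2Y^k$; summing over $k\geq 1$ finishes. You instead hit both sides with the Laplace transform $\int_0^\infty e^{-t}(\cdot)\,dt$, which in one stroke replaces each $t^m/m!$ by $1$ on the left and turns $2e^{tY}$ into $2/(1-Y)$ on the right. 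These are the two standard dual devices for converting an exponential generating function in $t$ into an ordinary one; the paper's differentiate-then-sum route is marginally more elementary and sidesteps any convergence discussion, while your integral is slicker as a single operation, and your formal-power-series remark adequately disposes of the analytic objection. Your closing direct verification via the composition bijection of Lemma \ref{bijection} is a nice independent check that the paper does not include.
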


\begin{proof}

In the Lemma \ref{lemmo} we replace each $x_i$ with $t x_i$, we obtain

\[
2 + \sum_{n=0}^{\infty} \frac{u^{n+1}}{2^n} \sum_{\substack{\pi\in S_{b_2} \\ \sum n_i(\pi) = k}} t^k \frac{\prod_i x_i^{n_i(\pi)}}{k!} = 2 \prod_{i=1}^{\infty} e^{t x_i\left(\frac{u}{2}\right)^i}
\]

Differentiating $k$ times with respect to $t$ and then setting $t=0$ results

\[
\sum_{n=0}^{\infty} \frac{u^{n+1}}{2^n} \sum_{\substack{\pi\in S_{b_2} \\ \sum n_i(\pi) = k}} \prod_i x_i^{n_i(\pi)} = 2 \left(\sum_{i\geq1} x_i \left(\frac{u}{2}\right)^i \right)^k.
\]

Summing this equation over all $k$ we obtain the desired equation, and our cycle index.

\end{proof}

We will now present an example calculation and compare it to Proposition 2.2.%\ref{meanformula}.

\begin{Ex}\label{twomomentcalc}
We calculate the expected number of fixed points for a random $b_2$-regular permutation using both methods. 

(i) We apply Proposition 2.2 to the vector $b_2 = [1, 1, 2, 3, \ldots, n-1]$ and get

\begin{align*}
\mathbb{E}[C_{n,1}(\pi)] &= \frac{1}{2^{n-1}} \left(2^{n-2} + 2^{n-3} + \ldots + 2^{n-3} + 2^{n-2}\right) \\
&= \frac{1}{2^{n-1}} \left(2 2^{n-2} + (n-2) 2^{n-3}\right) \\
&= \frac{4 2^{n-3} + (n-2) 2^{n-3}}{2^{n-1}} \\
&= \frac{4 + (n-2)}{2^2} \\
&= \frac{n+2}{4}.
\end{align*}

(ii) We use the cycle index in Theorem \ref{cycleindex}. By setting $x_1 = x$ and $x_i = 1$ for $i>1$, we obtain the gerenating function for the number of fixed points,

\[
2 \sum_{k\geq 1} \left(\frac{ux}{2} + \sum_{i\geq 2} \left(\frac{u}{2}\right)^i\right)^k.
\]

Using formal power series, this can be rewritten as

\[
2 \left(\frac{2(2-u)}{(2-u)(2-ux)-u^2} - 1\right).
\]

Taking its derivative with respect to $x$ and setting $x=0$ results in the generating function for the first moments,

 \[
 \frac{4u(2-u)^2}{\left(4(1-u)\right)^2}.
 \] 
 
Expanding this power series, we get

\begin{align*}
 \frac{4u(2-u)^2}{\left(4(1-u)\right)^2} &= \frac{u}{4} \left(\frac{2-u}{1-u}\right)^2 \\
 &= \frac{u}{4} \left(1 + \frac{1}{1-u}\right)^2\\
 &= \frac{u}{4} \left(1 + 2 \sum_{n\geq 0} u^n + \sum_{n\geq 0} (1+n)u^n\right)\\
 &= \frac{u}{4} \left(1 + \sum_{n\geq 0} (n+3) u^n\right)\\
 &= \frac{u}{4} + \sum_{n\geq 1} \frac{n+2}{4} u^n.
\end{align*}

The general term of the power series, $\frac{n+2}{4}$, is the desired quantity.

\end{Ex}

Similarly, we can calculate any moment of $k$-cycles, by setting $x_k = x$ and other $x_i = 1$ and differentiating as many times as necessary to obtain the desired degree of moment.

\begin{Th}\label{twomomentsofkcycles}

Let $C_{n,k}$ be the number of $k$-cycles of a uniformly selected $b_2$-regular permutation. Then 

\[
\mu_{n,k} \defeq \mathbb{E}[C_{n,k}] = \frac{n-k+3}{2^{k+1}},
\]

\noindent and 

\[
\mathbb{E}[C_{n,k}(C_{n,k}-1)] = \frac{(n+2-2k)(n+7-2k)}{4^{k+1}},
\]

\noindent combining we get

\[
\sigma^2_{n,k} \defeq Var(C_{n,k}) = \frac{(2^{k+1}-2k+3)n + 3k(k-4) + (3-k)2^{k+1}+ 5}{4^{k+1}}.
\]

\end{Th}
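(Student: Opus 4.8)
The plan is to read off the mean and the second factorial moment directly from the cycle index of Theorem \ref{cycleindex}, in exactly the manner that the first moment was extracted in Example \ref{twomomentcalc}, and then to assemble the variance from these two quantities. First I would specialize the cycle index by setting $x_k = x$ and $x_i = 1$ for every $i\neq k$. Writing
\[
A = A(u,x) = (x-1)\left(\tfrac{u}{2}\right)^k + \frac{u}{2-u},
\]
which is the inner sum $\sum_{i\geq 1} x_i (u/2)^i$ after this substitution (using $\sum_{i\geq 1}(u/2)^i = \frac{u}{2-u}$), the right-hand side of the cycle index collapses through the geometric series $\sum_{j\geq 1} A^j = A/(1-A)$ into the closed form
\[
F(u,x) = \frac{2A}{1-A}.
\]
By the same reasoning as in Example \ref{twomomentcalc}, the $r$-th factorial moment of $C_{n,k}$ is then obtained as $[u^n]\,\partial_x^{\,r} F(u,x)\big|_{x=1}$.

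For the mean I would differentiate once. Since $\partial_x A = (u/2)^k$ does not depend on $x$, a short computation gives $\partial_x F = 2(u/2)^k/(1-A)^2$, and at $x=1$ one has $A = \frac{u}{2-u}$ and $1-A = \frac{2-2u}{2-u}$, so that
\[
\partial_x F\big|_{x=1} = \frac{u^k (2-u)^2}{2^{k+1}(1-u)^2}.
\]
Expanding $(2-u)^2$ against $1/(1-u)^2 = \sum_m (m+1)u^m$ yields $[u^j]\,(2-u)^2/(1-u)^2 = j+3$, whence $\mu_{n,k} = [u^n]\,\partial_x F|_{x=1} = (n-k+3)/2^{k+1}$, in agreement with the $k=1$ instance worked out in the example.

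For the second factorial moment I would differentiate twice, obtaining $\partial_x^2 F = 4(u/2)^{2k}/(1-A)^3$, which at $x=1$ equals $\frac{u^{2k}(2-u)^3}{2^{2k+1}(1-u)^3}$. Expanding $(2-u)^3$ against $1/(1-u)^3 = \sum_m \binom{m+2}{2}u^m$ gives $[u^j]\,(2-u)^3/(1-u)^3 = \tfrac12(j+2)(j+7)$, so that $\mathbb{E}[C_{n,k}(C_{n,k}-1)] = (n-2k+2)(n-2k+7)/4^{k+1}$. Finally I would combine the two through the identity $\mathrm{Var}(C_{n,k}) = \mathbb{E}[C_{n,k}(C_{n,k}-1)] + \mu_{n,k} - \mu_{n,k}^2$ and simplify.

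The structural steps above are routine once the cycle index is in hand; the one place demanding genuine care, and the most likely source of error, is the final algebraic consolidation of the variance. There the three rational expressions must be placed over the common denominator $4^{k+1}$ and the resulting numerator collected into the stated form $(2^{k+1}-2k+3)n + 3k(k-4) + (3-k)2^{k+1} + 5$. Because this numerator mixes exponential terms in $k$ with polynomial terms in $n$ and $k$, the bookkeeping must track the $2^{k+1}$ contributions (coming from $\mu_{n,k}$ against $\mu_{n,k}^2$) separately from the purely polynomial contributions, and the two coefficient extractions from $(2-u)^r/(1-u)^r$ must be carried out without off-by-one slips in the binomial coefficients.
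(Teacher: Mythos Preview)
Your proposal is correct and follows precisely the approach the paper takes: the paper's own proof is a one-sentence pointer back to Example \ref{twomomentcalc} (``set $x_k=x$, $x_i=1$, differentiate once for the mean and twice for the second factorial moment, then combine''), and you have simply carried out those computations explicitly, with the coefficient extractions from $(2-u)^r/(1-u)^r$ and the variance assembly all checking out.
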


\begin{proof}

Proceeding as in Example \ref{twomomentcalc}, we set $x_k = x$ and $x_i = 1$. Differentiating once gives the expected value, differentiating twice gives the second falling moment. Adding the correct amount of the first moment gives the variance.

\end{proof}

These numbers appear in the central limit theorem, Theorem \ref{r=2clt}. Before progressing further into the main result, we state a result on compositions of $n$. Using the first moments we can easily prove, for example, that number of $1$'s that occur in compositions of $5$ is the same as the number of $2$'s that occur in compositions of $6$.

% results about composition, specifically the one about the number of 1's in 10 vs 2's in 11.

\begin{Prop}
The total number of $k$-parts in compositions of $n$ is the same as the total number of $k+m$-parts in compositions of $n+m$. 
\end{Prop}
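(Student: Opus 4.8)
The plan is to reduce the statement about compositions to a statement about $k$-cycles via Lemma \ref{bijection}, then read off a closed form from the first-moment formula in Theorem \ref{twomomentsofkcycles}. The guiding observation is that this closed form will depend on $n$ and $k$ only through the difference $n-k$, and hence is manifestly invariant under the simultaneous shift $(n,k)\mapsto(n+m,k+m)$.

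First I would invoke the bijection of Lemma \ref{bijection}, which matches each composition of $n$ to a unique $b_2$-regular permutation and carries $k$-parts to $k$-cycles. Summing the number of $k$-parts over all compositions of $n$ therefore equals summing the number of $k$-cycles over all $\pi\in S_{b_2}$; call this total $T(n,k)$. Since a uniformly random $b_2$-regular permutation is drawn from a set of size $|S_{b_2}|=2^{n-1}$ by formula (\ref{restvecformula}), the uniform expectation bookkeeps exactly this sum:
$$
T(n,k) \;=\; \sum_{\pi\in S_{b_2}} C_{n,k}(\pi) \;=\; |S_{b_2}|\cdot \mathbb{E}[C_{n,k}] \;=\; 2^{n-1}\,\mu_{n,k}.
$$

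Next I would substitute $\mu_{n,k}=\dfrac{n-k+3}{2^{k+1}}$ from Theorem \ref{twomomentsofkcycles} and simplify:
$$
T(n,k) \;=\; 2^{n-1}\cdot\frac{n-k+3}{2^{k+1}} \;=\; (n-k+3)\,2^{\,n-k-2}.
$$
The right-hand side is a function of $n-k$ alone. Replacing $(n,k)$ by $(n+m,k+m)$ leaves $n-k$ unchanged, so $T(n+m,k+m)=T(n,k)$, which is precisely the claimed equality. (As a sanity check, this gives $T(5,1)=7\cdot 2^{2}=28=T(6,2)$, matching the worked example preceding the statement.)

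There is no serious obstacle here: once the bijection and the mean formula are available, the whole argument collapses to a single computation. The one point deserving a moment's care is the translation step, namely the identity $\sum_{\pi\in S_{b_2}} C_{n,k}(\pi) = |S_{b_2}|\cdot\mathbb{E}[C_{n,k}]$, which is immediate from the definition of the uniform distribution on $S_{b_2}$ together with the part-to-cycle correspondence; everything downstream is routine arithmetic on powers of two.
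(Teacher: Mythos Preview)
Your proof is correct and follows essentially the same route as the paper: compute the total number of $k$-parts as $2^{n-1}\mu_{n,k}=(n-k+3)\,2^{n-k-2}$ via Theorem \ref{twomomentsofkcycles} and observe that the result depends only on $n-k$. The paper's version is terser (it cites the moment formula and writes down the product directly), but the content is identical.
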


\begin{proof}

By Proposition \ref{twomomentsofkcycles}, we have the total number of $k$-parts in compositions of $n$

\[
total \# k-parts = \frac{n-k+3}{2^{k+1}} 2^{n-1} = (n-k+3) 2^{n-k-2}.
\]

In this formula replacing $k$ with $k+m$ and $n$ with $n+m$ for any positive integer $m$ results in the same number, completing the proof.

\end{proof}

Working with different moments, many other results regarding the behavior of $k$-parts of a random composition can be proven.

In the next section we will be investigating how occurance of each cycle affects the other cycles. This will be crucial in the proof of the central limit theorem.

%HERE AND THERE
\subsection{Dependence Structure of Indicators}\label{localdep}

%Which cycles are allowed? Construct them. 

\forceindent As stated in Section 2.1, we consider the number of $k$-cycles as sums of indicators. By Remark \ref{cycleremark} we can identify all allowed cycles in a $b_2$-regular permutation. Using the fact that $\pi(i) \geq i-1$, we may set $\pi(i) = i$, i.e., any coordinate might be a fixed point. So we get $C_{n,1}(\pi) = \sum_{i=1}^n \mathbbm{1}\{\pi(i) = i\}$. Similarly, only the adjacent transposition are allowed, as otherwise we would violate Remark \ref{cycleremark} by mapping some coordinate $i$ to $\pi(i) < i-1$. Thus, we get $C_{n,2}(\pi) = \sum_{i=1}^{n-1} \mathbbm{1}\{\pi(i) = i+1, \pi(i+1) = i\}$.

\forceindent It will proceed similarly for other cycle sizes. There are $n-k+1$ potential $k$-cycles, as the ``first'' of them has $k$ as its largest element and the ``last'' of them has $n$ as its largest element. We will denote the indicators of these $k$ cycles as ``$I^k_i$'', where $i= 1, \ldots, n-k+1$. In this chapter, our objective is to identify the dependence relationship between $I^k_i$ and $I^k_j$. Specifically, we will prove that $I^k_i$ and $I^k_j$ are independent if and only if $|i-j|\geq k+1$. 

\forceindent Conditioning on the existence of a $k$-cycle will result in two potential sitautions, depending on if the $k$-cycle is positioned in one of the end points, being the ``first'' or the ``last'' $k$-cycle or not. We will explain this through the bijection we defined in Lemma \ref{bijection}. Lets first condition on the existence of $I^k_1$ or $I^k_{n-k+1}$. This conditioning leads us to compositions with a $k$-part in one of the endpoints, leaving $n-k$ remaning to be composed. There are $2^{n-k-1}$ compositions of $n-k$, therefore the probability that $I^k_1$ (or,  $I^k_{n-k+1}$) occurs is 

\[
P(I^k_1 occurs) = P(I^k_{n-k+1} occurs) = \frac{2^{n-k-1}}{2^{n-1}} = \frac{1}{2^{k}}.
\]\vspace{0.1in}

If we want one of the ``mid'' $k$-cycles to occur, then this would separate the remaning parts into two pieces, say of sizes $a$ and $n-k-a$. Then, there are $2^{a-1} 2^{n-k-a-1} = 2^{n-k-2}$ ways to compose these two parts, resulting in

\[
P(I^k_{j} occurs) = \frac{2^{n-k-2}}{2^{n-1}} = \frac{1}{2^{k+1}}, \mbox{ for } j = 2, 3, \ldots, n-k.
\]\vspace{0.1in}

\forceindent Now lets go back to conditioning on the occurance of $I^k_1$. First of all, any other potential $k$-cycle that would use any number in $I^k_1$, namely $1, 2, \ldots, k$, would become impossible. As $I^k_1$ corresponds to a $k$-part, the resulting subspace will be identical to a composition of $n-k$. Here, within this remaining $n-k$ part the probability for any $k$-part stays the same, except for the cycle that starts immediately after $I^k_1$. This $k$-cycle, $I^k_{k}$ used to be a ``mid'' $k$-cycle, with probability of happening $1/2^{k+1}$, but now its one of the end $k$-cycles of the remaining $n-k$ composition, bumping its probability of happening to $1/2^k$. For $j= k+1, \ldots, n-k+1$, $I^k_j$ remain unaffected. Symmetrically, the occurance of the $k$-cycle $I^k_{n-k+1}$ prevents $k$-cycles $I^k_{n-k}, \ldots, I^k_{n-2k+1}$ would become impossible, and $I^k_{n-2k}$ would change its probability of happening from $1/2^{k+1}$ to $1/2^k$. The remaning $k$-cycles would remain unaffected.

\forceindent A similar argument can be done for $I^k_j$, for $j= 2, \ldots, (n-k)$, where any $k$-cycle that uses a value in $I^k_j$  becomes impossible. Any $k$-cycle with a value that is adjacent to a value in $I^k_j$ changes its probability either from $1/2^k$ to $1/2^{k+1}$ or vice versa. 

\begin{Def}
Let $c$ and $d$ be any two different cycles of a permutation $\pi$, where elements of $c$ are $c_1, c_2, \ldots, c_k$ and elements of $d$ are $d_1, d_2, \ldots, d_m$. Then the cycles $c$ and $d$ are called \emph{$p$-distant} if
\[
\min_{\substack{i\in\{1,\ldots,k\} \\ j\in\{1,\ldots,m\}}} |c_i - d_j| > p
\]

and $p$ is the largest number satisfying this inequality.
\end{Def}

Combining these observations, we reach to the following independence structure.

\begin{Prop}
Let $I^k_1, I^k_2, \dots, I^k_{n-k+1}$ be the potential $k$-cycles of a random $b_2$-regular permutation. Then the indicators for $I^k_i$ and $I^k_i$ are independent if and only if these two cycles are at least $1$-distant, i.e., $|i-j| > k+1$ in terms of index of cycles.
\end{Prop}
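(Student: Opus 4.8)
The plan is to establish the claimed independence criterion by working through the composition bijection of Lemma~\ref{bijection}, translating the occurrence of a potential $k$-cycle $I^k_i$ into the event that the corresponding composition of $n$ has a $k$-part in a prescribed location. The key observation, already developed in the preceding discussion, is that the cycle $I^k_i$ occupies the consecutive values $\{i, i+1, \ldots, i+k-1\}$, so that two potential $k$-cycles $I^k_i$ and $I^k_j$ (say $i < j$) are at least $1$-distant precisely when $j > i + k$, i.e.\ when $|i-j| \geq k+1$. I would first record this equivalence between the distance condition on the cycles and the index condition $|i-j| \geq k+1$, which is a routine translation and fixes the meaning of both sides of the biconditional.

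For the forward direction (independence when $|i-j| \geq k+1$), I would argue that conditioning on $I^k_i$ occurring decomposes the composition into the portion to the left of the $k$-part, the $k$-part itself, and the portion to the right, and that the value $P(I^k_j \mid I^k_i)$ is computed by counting compositions of the remaining $n-k$ symbols. When $j > i+k$, the cycle $I^k_j$ lies strictly in the interior of the right-hand block and is neither an endpoint of that block nor adjacent to the forced record position created by $I^k_i$; hence its conditional probability equals its unconditional value $1/2^{k+1}$, and likewise for the endpoint cases the conditional and unconditional probabilities agree. I would verify $P(I^k_i \cap I^k_j) = P(I^k_i)P(I^k_j)$ directly via the counting $2^{a-1}2^{b-1}2^{c-1} = 2^{n-2k-3}\cdot 2^{?}$ of the three free blocks, being careful with the boundary subcases where $i$ or $j$ equals $1$ or $n-k+1$ (these change a factor of $2$ in exactly the compensating way, as already noted in the text).

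For the reverse direction (dependence when $|i-j| \leq k$) I would split into the case where the two potential cycles overlap in values ($|i-j| \leq k-1$), where $I^k_i \cap I^k_j$ is impossible so $P(I^k_i \cap I^k_j) = 0 \neq P(I^k_i)P(I^k_j)$, and the adjacent case $|i-j| = k$, where the two $k$-parts are forced to sit consecutively in the composition. In the adjacent case the occurrence of one cycle shifts the other from a ``mid'' position to an endpoint of the reduced composition, changing its conditional probability from $1/2^{k+1}$ to $1/2^{k}$, so $P(I^k_j \mid I^k_i) \neq P(I^k_j)$ and the indicators are dependent. I expect the main obstacle to be the careful bookkeeping of the several boundary subcases---in particular when one of $I^k_i$, $I^k_j$ is itself an endpoint cycle $I^k_1$ or $I^k_{n-k+1}$, so that the conditioning removes an endpoint block rather than splitting an interior block---since there the baseline probabilities are $1/2^k$ rather than $1/2^{k+1}$ and one must confirm that the factorization (or its failure) persists. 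I would handle this by organizing the argument around the three free blocks (left, middle, right) and verifying that in every admissible configuration the product of their $2^{(\cdot)-1}$ contributions matches $P(I^k_i)P(I^k_j)$ exactly when $|i-j|\geq k+1$ and fails otherwise.
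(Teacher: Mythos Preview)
Your approach is essentially the paper's own: both rely on the composition bijection and the conditioning discussion immediately preceding the proposition to show that the occurrence of $I^k_i$ leaves the probability of $I^k_j$ unchanged exactly when the two blocks of consecutive values neither overlap nor abut, and your write-up is simply more explicit about the overlap, adjacent, and endpoint subcases than the paper's terse appeal to ``the same reasoning before.'' One caveat worth recording: your threshold $|i-j|\ge k+1$ is the one the paper actually uses downstream (the count $(k-1)+1+(k-1)+1=2k$ and $D=2k$), so the ``$|i-j|>k+1$'' in the proposition as stated appears to be an off-by-one slip rather than a discrepancy in your argument.
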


\begin{proof}
WLOG assuming $i < j$ and using the same reasoning before, for occurance of $I^k_i = (i\,\,\,\, i+k \,\,\,\, i+k-1 \,\,\,\, i+k-2 \ldots i+1)$, written using the cycle notation, not to affect occurance of $I^k_j = (j \,\,\,\, j+k \,\,\,\, j+k-1  \,\,\,\, j+k-2 \ldots j+1)$, we must have the maximum value $i+k$ in $I^k_i$ and the minimum value $j$ in $I^k_j$ to be at least $1$-distant. This happens when $j-i > k+1$. Thus, $|i-j| > k+1$ implies the independence of the indicators for $I^k_i$ and $I^k_j$ (and vice versa).
\end{proof}

\forceindent This form of dependence is sometimes referred to as ``$m$-dependence''. There are many central limit theorems for $m$-dependent random variables. A strong result (Theorem 4.2) that is applicable can be found in \cite{Chen}. 

\forceindent We will use a result of a much deeper and more comprehensive method, called ``Stein's Method'', as it will provide an easy calculation for the upper bound of the approximation error.

%\newpage

\subsection{Stein's Method and the CLT}\label{steinclt}

\forceindent Stein's Method provides means to establish approximations to many well-known probability distributions. This method has been studied extensively since Stein's seminal paper \cite{Stein} in 1972. It includes a vast arsenal of probabilistic tools that are applicable in many different problems. One special case is when we know the underlying local dependence structure and want to prove the sum of these dependent random variables have a Normal limiting distribution. A general description is presented in chapter 9 of the text ``Normal Approximation by Stein's Method'', by Chen, Goldstein and Shao \cite{ChenGoldsteinShao}. The result we actually use (Theorem 3.6) is proved in Ross's survey \cite{Ross} on Stein's method. We will state it without proving.

\begin{Th}[Local Dependence CLT]\label{localdepclt}
Let $X_1, \ldots, X_n$ be random variables such that $\mathbbm{E}[X_i^4] < \infty$, $\mathbbm{E}[X_i] = 0$, $\sigma^2 = Var(\sum_i X_i)$, and define $W = \sum_i X_i/\sigma$. Let the collection $(X_1, \ldots, X_n)$ have dependency neighborhoods $N_i$, $i= 1, \ldots, n$, and also define $D\defeq \max_{1\leq i\leq n} |N_i|$. Then for $Z$ a standard normal random variable,
\[
d_W(W,Z) \leq \frac{D^2}{\sigma^3} \sum_{i=1}^n \mathbbm{E}|X_i|^3 + \frac{\sqrt{28} D^{3/2}}{\sqrt{\pi} \sigma^2} \sqrt{\sum_{i=1}^n \mathbbm{E}[X_i^4]}.
\]\qed
\end{Th}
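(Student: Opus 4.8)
\forceindent The plan is to run the standard Stein's method argument for the Wasserstein (Lipschitz) metric, so I first recall the two facts that make it work: $d_W(W,Z) = \sup_{h}\,|\mathbb{E}[h(W)] - \mathbb{E}[h(Z)]|$ over all $1$-Lipschitz $h$, and for each such $h$ the Stein equation $f'(w) - wf(w) = h(w) - \mathbb{E}[h(Z)]$ has a solution $f = f_h$ with the classical bounds $\|f'\|_\infty \leq \sqrt{2/\pi}$ and $\|f''\|_\infty \leq 2$. Evaluating the Stein equation at $W$ and taking expectations reduces the entire problem to a bound on $|\mathbb{E}[f'(W) - Wf(W)]|$ that is uniform in $h$. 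The two summands in the claimed inequality will emerge as two distinct contributions to this single quantity.

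\forceindent Writing $\xi_i = X_i/\sigma$ so that $W = \sum_i \xi_i$ has unit variance, I would exploit the local dependence through the neighborhoods $N_i$. For each $i$ set $T_i = \sum_{j\in N_i}\xi_j$ and $W_i = W - T_i$, so that $\xi_i$ is independent of $W_i$ by the defining property of $N_i$. Because $\mathbb{E}[\xi_i] = 0$, independence gives $\mathbb{E}[\xi_i f(W_i)] = 0$, whence $\mathbb{E}[Wf(W)] = \sum_i \mathbb{E}[\xi_i(f(W) - f(W_i))]$. The same independence, together with the vanishing of $\mathbb{E}[\xi_i\xi_j]$ whenever $j\notin N_i$, shows that $A := \sum_i \xi_i T_i$ has mean $\mathbb{E}[A] = \mathbb{E}[W^2] = 1$. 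A second-order Taylor expansion $f(W) - f(W_i) = T_i f'(W) - \tfrac12 T_i^2 f''(\zeta_i)$, combined with the identity $\mathbb{E}[f'(W)] = \mathbb{E}[f'(W)A] - \operatorname{Cov}(f'(W),A)$ coming from $\mathbb{E}[A]=1$, should produce a deliberate cancellation of the matching first-order pieces $\mathbb{E}[f'(W)A] = \sum_i \mathbb{E}[\xi_i T_i f'(W)]$, leaving the clean identity
\[
\mathbb{E}[f'(W) - Wf(W)] = -\operatorname{Cov}\big(f'(W), A\big) + \tfrac12 \sum_i \mathbb{E}\big[\xi_i T_i^2 f''(\zeta_i)\big].
\]

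\forceindent It then remains to bound the two pieces separately. The remainder sum is controlled using $\|f''\|_\infty \leq 2$, giving $\sum_i \mathbb{E}[|\xi_i| T_i^2]$; expanding $T_i^2 = \sum_{j,k\in N_i}\xi_j\xi_k$, applying the elementary inequality $|\xi_i\xi_j\xi_k| \leq \tfrac13(|\xi_i|^3 + |\xi_j|^3 + |\xi_k|^3)$, and using $|N_i|\leq D$ to count terms, I expect to recover the third-moment term $\tfrac{D^2}{\sigma^3}\sum_i \mathbb{E}|X_i|^3$. For the covariance piece I would use $|\operatorname{Cov}(f'(W),A)| \leq \|f'\|_\infty\, \mathbb{E}|A - \mathbb{E}A| \leq \sqrt{2/\pi}\,\sqrt{\operatorname{Var}(A)}$, so that the second summand reduces to the estimate $\operatorname{Var}(A) \leq 14\, D^3 \sum_i \mathbb{E}[\xi_i^4]$; then $\sqrt{2/\pi}\cdot\sqrt{14} = \sqrt{28}/\sqrt{\pi}$ and $\mathbb{E}[\xi_i^4] = \mathbb{E}[X_i^4]/\sigma^4$ assemble exactly into the stated fourth-moment term.

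\forceindent The main obstacle is this variance estimate for $A = \sum_{i}\sum_{j\in N_i}\xi_i\xi_j$. Here I would expand $\operatorname{Var}(A)$ as a sum of covariances $\operatorname{Cov}(\xi_i\xi_j,\,\xi_k\xi_l)$ over pairs with $j\in N_i$ and $l\in N_k$, and argue that each such covariance vanishes unless the two index pairs are linked through the neighborhood relation — this is the delicate book-keeping, since the dependency neighborhoods need not be symmetric or disjoint and one must carefully track in how many neighborhoods a given index can appear. Counting the surviving quadruples of indices is precisely where the factor $D^3$ is produced, and a Hölder (or Young) step bounds each surviving $\mathbb{E}[\xi_i\xi_j\xi_k\xi_l]$ by an average of the fourth moments $\mathbb{E}[\xi_\bullet^4]$. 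Assembling the two bounds and taking the supremum over $1$-Lipschitz $h$ then yields the claimed inequality.
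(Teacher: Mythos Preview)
The paper does not prove this theorem at all: immediately before the statement it writes ``The result we actually use (Theorem~3.6) is proved in Ross's survey \cite{Ross} on Stein's method. We will state it without proving,'' and the \qed after the display is the end of the matter. So there is no in-paper argument to compare against; the ``paper's proof'' is a citation.

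Your outline is the standard Stein's-method proof that Ross gives, and the architecture is right: solve the Stein equation for Lipschitz $h$ with the bounds $\|f'\|_\infty\le\sqrt{2/\pi}$, $\|f''\|_\infty\le2$; form $T_i=\sum_{j\in N_i}\xi_j$ and $W_i=W-T_i$; use $\mathbb{E}[\xi_i f(W_i)]=0$ and a second-order Taylor remainder to isolate a covariance term and a third-moment remainder; then establish $\operatorname{Var}(A)\le 14D^3\sum_i\mathbb{E}[\xi_i^4]$ by counting the quadruples $(i,j,k,l)$ for which $\operatorname{Cov}(\xi_i\xi_j,\xi_k\xi_l)$ can be nonzero. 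The numerology $\sqrt{2/\pi}\cdot\sqrt{14}=\sqrt{28}/\sqrt{\pi}$ is correct, and your AM--GM handling of $|\xi_i\xi_j\xi_k|$ is exactly how the $D^2\sigma^{-3}\sum_i\mathbb{E}|X_i|^3$ term is produced. The one place you should be careful if you actually write this up is the variance-of-$A$ step: one needs that $i\in N_i$ (so the dependency neighborhoods include the index itself) and a reindexing argument to turn the count of linked quadruples into a clean $D^3\sum_i$ bound; Ross handles this explicitly, and your phrase ``delicate book-keeping'' is accurate rather than a gap.
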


\forceindent Here $d_W$ is the \emph{Wasserstein} metric defined over probability measures. In general, many different metrics may be used to understand the distance between two probability measures, say $\mu$ and $\nu$, in the following general form:

\[
d_{\mathscr{H}}(\mu,\nu) = sup_{h\in\mathscr{H}}\left|\int h(x) \,d\mu(x) - \int h(x) \,d\nu(x)\right|,
\]

where $\mathscr{H}$ is a specific family of ``test'' functions. For the random variables $W$ and $Z$ (as in Theorem \ref{localdepclt}) with measures $\mu$ and $\nu$, we set $d_{\mathscr{H}}(W,Z) = d_{\mathscr{H}}(\mu,\nu)$, an abuse of notation.

\forceindent There are several choices for $\mathscr{H}$ used in different contexts. One of the most natural choices is when $\mathscr{H} = \{\mathbbm{1}[.\leq x]: x\in\mathbb{R}\}$, which leads to \emph{Kolmogorov} metric, denoted $d_K$. Kolmogorov metric is one of the most intuitive probability metrics, giving the maximum difference between the distribution functions of the random variables. When $\mathscr{H} = \{h:\mathbb{R} \to \mathbb{R} : |h(x) - h(y)| \leq |x - y|\}$, we obtain the aforementioned Wasserstein metric, denoted by $d_W$. Wasserstein metric is another commonly used measure of distance between two probability measures. We can use Wasserstein metric to bound Kolmogorov metric from above using the following inequality, stated as Proposition 1.2 in \cite{Ross}:

\[
d_K(W,Z) \leq \sqrt{2C d_W(W,Z)},
\]\vspace{0.1in}

where $C$ is an upper bound for the density function of $Z$. Now we are equipped to combine the moment calculations via the cycle index, local dependence structure of the indicators and local dependence central limit theorem, Theorem \ref{localdepclt}.

\forceindent For the indicators of $b_2$-regular permutation $k$-cycles, we observed that each cycle may have some intersection of elements with $k-1$ many $k$-cycles to its left and right, and it changed the probability of the indicators that were $1$-distant from the its endpoints. For a fixed indicator, these make up to a maximum of $(k-1) + 1 (k-1) + 1 = 2k$ dependent indicators. That means $D\defeq \max_{1\leq i\leq n} |N_i| = 2k$.

\forceindent To be able to utilize Theorem \ref{localdepclt}, we need to use random variables with mean zero. Recalling that the probability that for a $b_2$-regular permutation

\[
P(I_i^k occurs) = \begin{cases} 
      1/2^k & i= 1, n-k+1 \\
      1/2^{k+1} & i= 2, \ldots, n-k
   \end{cases}
\]

we let

\[
X_i = \begin{cases} 
      \mathbbm{1}(I_i^k occurs) - 1/2^k & i= 1, n-k+1 \\
      \mathbbm{1}(I_i^k occurs) - 1/2^{k+1} & i= 2, \ldots, n-k
   \end{cases}
\]

so that $\mathbbm{E}[X_i] = 0$. The values $\mu_{n,k}$ and $\sigma_{n,k}^2$ are still to be used. Variance of the sum of these shifted indicators is still $\sigma_{n,k}^2$ as the sum of these shifts still does not change the variance, and $\mu_{n,k}$ is this total shift amount. What we need to calculate are $\mathbbm{E}|X_i|^3$ and $\mathbbm{E}[X_i^4]$ for each $i$. By the definition of $X_i$, these values are different for $i= 1, n-k+1$ versus $i= 2,\ldots, n-k$.

For $i = 1, n-k+1$,

\[
|X_i| = \begin{cases} 
      1/2^k & \mbox{with probability} 1 - \frac{1}{2^k} \\
      1 - 1/2^{k} & \mbox{with probability} \frac{1}{2^k}
   \end{cases}
\]

and for $i = 2,\ldots, n-k$,

\[
|X_i| = \begin{cases} 
      1/2^{k+1} & \mbox{with probability} 1 - \frac{1}{2^{k+1}} \\
      1 - 1/2^{k+1} & \mbox{with probability} \frac{1}{2^{k+1}}
   \end{cases}.
\]

Calculating their third moments, for $i = 1, n-k+1$,

\begin{align*}
\mathbbm{E}|X_i|^3 &= \frac{1}{2^{3k}}\left(1-\frac{1}{2^k}\right) + \left(1 - \frac{1}{2^k}\right)^3 \frac{1}{2^k}\\
&= \frac{2^{3k} - 3.2^{2k} + 2^{k+2} - 2}{2^{4k}},
\end{align*}

and for $i = 2,\ldots, n-k$,

\begin{align*}
\mathbbm{E}|X_i|^3 &= \frac{1}{2^{3k+3}}\left(1-\frac{1}{2^{k+1}}\right) + \left(1 - \frac{1}{2^{k+1}}\right)^3 \frac{1}{2^{k+1}}\\
&= \frac{2^{3(k+1)} - 3.2^{2(k+1)} + 2^{k+3} - 2}{2^{4(k+1)}}.
\end{align*}

Summing these over $i$, we get

\begin{align*}
a_{n,k} &\defeq \sum_{i=1}^{n-k+1} \mathbbm{E}|X_i|^3 \\
 & = 2\left(\frac{2^{3k} - 3.2^{2k} + 2^{k+2} - 2}{2^{4k}}\right) + (n-k-1)\left(\frac{2^{3(k+1)} - 3.2^{2(k+1)} + 2^{k+3} - 2}{2^{4(k+1)}}\right)
\end{align*}

Now we calculate the fourth moments. For $i = 1, n-k+1$,

\begin{align*}
\mathbbm{E}[X_i^4] &= \left(\frac{-1}{2^k}\right)^4 \left(1 - \frac{1}{2^k}\right) + \left(1 - \frac{1}{2^k}\right)^4 \frac{1}{2^k} \\
&= \frac{(2^{2k} - 3 2^{k} + 3)(2^k - 1)}{2^{4k}},
\end{align*}

and for $i = 2,\ldots, n-k$,

\[
\mathbbm{E}[X_i^4] = \frac{(2^{2(k+1)} - 3 2^{k+1} + 3)(2^{k+1} - 1)}{2^{4(k+1)}}.
\]

Summing these over $i$, we get

\begin{align*}
b_{n,k} &\defeq \sum_{i=1}^{n-k+1} \mathbbm{E}[X_i^4] \\
& = 2\left(\frac{(2^{2k} - 3.2^{k} + 3)(2^k-1)}{2^{4k}}\right) + (n-k-1)\left(\frac{(2^{2(k+1)} - 3.2^{k+1} + 3)(2^{k+1} - 1)}{2^{4(k+1)}}\right).
\end{align*}

Now we combine all these ingredients within Theorem \ref{localdepclt}.

\begin{Th}\label{mainclt}
Let $\pi$ be a uniformly randomly selected $b_2$-regular permutation, $X_i$ be the shifted indicators of $k$-cycles of $\pi$ and $\sigma^2 = \sigma_{n,k}^2$. Further, define $W = \frac{1}{\sigma} \sum_i X_i$. Then for $Z~N(0,1)$, 

\[
d_W(W,Z) \leq \frac{4k^2}{\sigma^3} a_{n,k} + \frac{\sqrt{28} (2k)^{3/2}}{\sqrt{\pi} \sigma^2} \sqrt{b_{n,k}},
\]

where $\sigma^2 = \Theta(n)$, $a_{n,k} = \Theta(n)$ and $b_{n,k} = \Theta(n)$. Therefore the upperbound is of $\Theta(n^{-1/2})$.

\end{Th}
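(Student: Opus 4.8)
The plan is to read Theorem \ref{mainclt} as a direct instance of the Local Dependence CLT, Theorem \ref{localdepclt}, applied to the centered indicators $X_i$; the displayed inequality is then a substitution, and the remaining content is the asymptotic bookkeeping that pins down the orders of $\sigma^2$, $a_{n,k}$, and $b_{n,k}$. First I would verify the three standing hypotheses of Theorem \ref{localdepclt}. Each $X_i$ is an indicator shifted by a constant, hence bounded, so $\mathbbm{E}[X_i^4] < \infty$ automatically; the shift was chosen so that $\mathbbm{E}[X_i] = 0$; and because translating by constants leaves variance unchanged, $\mathrm{Var}\bigl(\sum_i X_i\bigr) = \mathrm{Var}(C_{n,k}) = \sigma_{n,k}^2$, the quantity already evaluated in Theorem \ref{twomomentsofkcycles}.

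Next I would record the dependency neighborhoods. The independence criterion established in the preceding Proposition shows that $X_i$ is independent of every $X_j$ whose cycle is sufficiently separated from the $i$th cycle, so $N_i$ reduces to the overlapping and adjacent cycles enumerated in the discussion preceding the theorem, giving $D = \max_i |N_i| = 2k$. Substituting $D = 2k$ into the bound of Theorem \ref{localdepclt}, together with the definitions $a_{n,k} = \sum_i \mathbbm{E}|X_i|^3$ and $b_{n,k} = \sum_i \mathbbm{E}[X_i^4]$, reproduces the displayed inequality with $D^2 = 4k^2$ and $D^{3/2} = (2k)^{3/2}$.

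It remains to extract the growth rates as $n \to \infty$ with $k$ fixed. The variance formula of Theorem \ref{twomomentsofkcycles} is affine in $n$ with leading coefficient $(2^{k+1} - 2k + 3)/4^{k+1}$, which is strictly positive for every $k \ge 1$ since $2^{k+1} > 2k - 3$; hence $\sigma^2 = \Theta(n)$ and $\sigma^3 = \Theta(n^{3/2})$. Each of $a_{n,k}$ and $b_{n,k}$ is a sum of two fixed endpoint terms and $n - k - 1$ identical ``middle'' terms, and the middle term is a positive constant depending only on $k$ (a nondegenerate bounded variable has strictly positive third and fourth absolute moments), so $a_{n,k} = \Theta(n)$ and $b_{n,k} = \Theta(n)$. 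Feeding these in, the first summand is $\Theta(n)/\Theta(n^{3/2}) = \Theta(n^{-1/2})$ and the second is $\Theta(n^{1/2})/\Theta(n) = \Theta(n^{-1/2})$, so the bound is $\Theta(n^{-1/2})$.

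Since the two heavy computations, the variance and the moment sums, are already in hand, there is no deep obstacle; the only point demanding care is non-degeneracy of the orders. The whole rate argument collapses if the leading coefficient of $\sigma_{n,k}^2$ were to vanish, which would push $\sigma^3$ below $\Theta(n^{3/2})$, so the essential check is that $2^{k+1} - 2k + 3 > 0$ for all $k$, and correspondingly that the per-term constants in $a_{n,k}$ and $b_{n,k}$ stay bounded away from $0$, so that these sums are genuinely linear rather than sublinear in $n$.
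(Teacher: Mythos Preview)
Your proposal is correct and is exactly the paper's approach: the paper's proof is the single sentence ``Follows from Theorem \ref{localdepclt} and the above calculations,'' and you have spelled out that substitution together with the $\Theta$-bookkeeping. Your added checks that the leading coefficient of $\sigma_{n,k}^2$ and the per-term moments are strictly positive are a nice touch that the paper leaves implicit.
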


\begin{proof}
Follows from Theorem \ref{localdepclt} and the above calculations.
\end{proof}

\forceindent Our main result, Theorem \ref{r=2clt} is a direct corollary of Theorem \ref{mainclt}. Moreover, the same result can be stated using parts of a random composition, using the bijection.

\begin{Cor}
Let $\lambda$ be a randomly chosen composition of $n$. For $n$ large enough, number of $k$-parts of $\lambda$ has approximate Normal distribution, i.e.,
\[
\# \mbox{of k-parts} \sim N\left(\frac{n-k+3}{2^{k+1}}, \frac{(2^{k+1}-2k+3)n + 3k(k-4) + (3-k)2^{k+1}+ 5}{4^{k+1}}\right).
\]\qed
\end{Cor}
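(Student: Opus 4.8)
The plan is to deduce the corollary directly from the central limit theorem for $k$-cycles of $b_2$-regular permutations by transporting everything through the bijection of Lemma \ref{bijection}. The essential observation is that this bijection is a map between two finite sets of equal cardinality $2^{n-1}$, and any bijection of finite sets pushes the uniform distribution on the domain forward to the uniform distribution on the codomain. Hence, if $\pi$ is a uniformly random $b_2$-regular permutation and $\lambda$ denotes the composition of $n$ assigned to it by Lemma \ref{bijection}, then $\lambda$ is itself a uniformly random composition of $n$. This single remark is what allows the probabilistic statement to cross over from permutations to compositions.

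First I would make the identification of random variables precise. By the part-matching property established in Lemma \ref{bijection}, each $k$-cycle of $\pi$ corresponds to a $k$-part of $\lambda$ and conversely, so the number of $k$-parts of $\lambda$ equals $C_{n,k}(\pi)$ exactly, as functions on the common uniform probability space. Consequently the random variable ``number of $k$-parts of $\lambda$'' and the random variable $C_{n,k}$ have identical laws for every $n$. Next I would simply invoke Theorem \ref{r=2clt}: the standardized statistic $(C_{n,k} - \mu_{n,k})/\sigma_{n,k}$ converges in distribution to a standard Normal as $n \to \infty$, where $\mu_{n,k}$ and $\sigma_{n,k}^2$ are the quantities computed in Theorem \ref{twomomentsofkcycles}. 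Because the number of $k$-parts shares this law, its standardized version converges to the same limit, which is exactly the asserted normality. If quantitative control is wanted, the Wasserstein bound of Theorem \ref{mainclt} transfers verbatim, since the Wasserstein distance depends only on the law of the random variable and the two laws coincide.

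There is essentially no analytic obstacle here, as the entire content has already been proved in the permutation setting; the corollary is a faithful translation. The one point deserving a word of justification is precisely the measure-preservation claim, namely that a uniformly chosen composition corresponds to a uniformly chosen permutation; this is where the counting identity $|S_{b_2}| = 2^{n-1} = \#\{\text{compositions of } n\}$ underlying Lemma \ref{bijection} is used. The remaining subtlety, which I would flag explicitly, is interpretive rather than mathematical: the informal notation ``$\sim N(\mu_{n,k}, \sigma_{n,k}^2)$'' should be read as convergence in distribution of the centered and scaled count to the standard Normal law, not as an exact equality of distributions at finite $n$.
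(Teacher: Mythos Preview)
Your proposal is correct and matches the paper's approach exactly: the paper presents this corollary with an immediate \qed, relying on precisely the observation you spell out, that the bijection of Lemma~\ref{bijection} pushes the uniform law on $S_{b_2}$ to the uniform law on compositions of $n$ while identifying $C_{n,k}$ with the number of $k$-parts. Your added remarks on measure preservation and the interpretive meaning of the $\sim N(\cdot,\cdot)$ notation are accurate elaborations of what the paper leaves implicit.
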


%Baska bise?

\section{Future Work} %generalization of b_2 to b_r

\forceindent One natural next direction for this study is to generalize the results for a wider family one-sided restictions. Specifically, we would like to find central limit theorems similar to Theorem \ref{r=2clt}, but for the more general restriction $\pi(i) \geq i-r+1$, for $r > 2$. The case we studied in this paper would be when $r$ is equal to $2$. The general restriction, $\pi(i) \geq i-r+1$ will produce $b_r$-regular permutations, where $b_r = [1, \ldots, 1, 2, \ldots, n-r+1]$ is the restriction vector with $r$ ones and increasing consecutively. Diaconis et al \cite{DiaconisGrahamHolmes} present an algorithm (Lemma 3.2) to produce a uniformly random $b$-regular permutations. 

\begin{Prop}
Let $b_1 \leq b_2 \leq \ldots \leq b_n$ be positive integers with $b_i \leq i$, $i\in\{1,\ldots,n\}$. The following algorithm results in a uniform choice from $S_b$. Begin with a list containing $1, 2, \ldots, n$.

\begin{itemize}
\item Choose $\pi(n)$ uniformly from $J = \{j: j\geq b_n\}$. Delete $\pi(n)$ from the list of choices.
\item Choose $\pi(n-1)$ uniformly from the elements $j$ in the current list $J$ with $j \geq b_{n-1}$. Delete $\pi(n-1)$ from the list of choices.\\
\vdots
\item Go on till you choose $\pi(1)$.
\end{itemize}\qed
\end{Prop}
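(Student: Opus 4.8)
The plan is to show three things: the algorithm never fails, every output lies in $S_b$, and the map sending a run's sequence of random choices to the resulting permutation is a bijection onto $S_b$ whose fibers all carry the same probability. First I would track the number of admissible choices available when selecting $\pi(i)$, working downward from $i=n$. At this stage the values $\pi(n), \pi(n-1), \ldots, \pi(i+1)$ have already been removed from the list, a total of $n-i$ deletions. The crucial observation is that each previously selected value satisfies $\pi(m) \geq b_m \geq b_i$ for every $m > i$, where the second inequality uses the hypothesis that $b$ is non-decreasing. Hence every one of the $n-i$ deleted elements was itself at least $b_i$.

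Second, I would count. In the original list $\{1, \ldots, n\}$ there are exactly $n - b_i + 1$ elements that are $\geq b_i$, and the previous step shows that precisely $n-i$ of them have already been deleted when we reach step $i$. Therefore the number of admissible choices for $\pi(i)$ is $(n - b_i + 1) - (n - i) = 1 + i - b_i$. Since the hypothesis $b_i \leq i$ gives $1 + i - b_i \geq 1$, there is always at least one legal choice, so the algorithm never gets stuck; and since each value is drawn from the surviving elements, the output is genuinely a permutation satisfying $\pi(i) \geq b_i$ for all $i$, that is, an element of $S_b$.

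Finally, I would assemble these counts into the uniformity statement. A complete run corresponds to a sequence of independent uniform selections, so any particular sequence of choices occurs with probability $\prod_{i=1}^n (1 + i - b_i)^{-1} = 1/|S_b|$ by formula (\ref{restvecformula}), a value that does not depend on which sequence was taken. Conversely, given any target $\pi \in S_b$, at each step the value $\pi(i)$ is still available (since $\pi$ is injective) and satisfies $\pi(i) \geq b_i$, so $\pi$ is reachable, and it is reached only by the choice sequence $c_i = \pi(i)$. Thus the choice sequences are in bijection with $S_b$, each occurring with probability $1/|S_b|$, which is precisely the uniform distribution on $S_b$.

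I expect the only genuine content to be the counting step, specifically the monotonicity argument showing that all $n-i$ already-removed values fall at or above the current threshold $b_i$; everything else is bookkeeping. If $b$ were not assumed non-decreasing this step would break down: some deleted element could lie below $b_i$, the count of surviving admissible values would fail to telescope to $1 + i - b_i$, and the resulting distribution would in general no longer be uniform.
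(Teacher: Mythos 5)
Your proof is correct. There is, however, no proof in the paper to compare it against: the paper states this proposition without proof, attributing it to Lemma 3.2 of Diaconis, Graham and Holmes \cite{DiaconisGrahamHolmes}, so your argument fills a gap rather than paralleling an existing one. The one genuinely load-bearing step is exactly the one you flag: since $b$ is non-decreasing, every value deleted before step $i$ satisfies $\pi(m) \geq b_m \geq b_i$, so the number of admissible choices at step $i$ is the \emph{deterministic} quantity $1+i-b_i$, independent of the history of earlier selections. That makes every legal run occur with the same probability $\prod_{i=1}^n (1+i-b_i)^{-1}$, and combined with your bijection between legal runs and elements of $S_b$ (reachability of each $\pi\in S_b$, plus injectivity of the run-to-permutation map), uniformity follows. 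One small improvement worth making: you do not actually need to invoke formula (\ref{restvecformula}) at all. Your own counting shows that the number of legal runs is $\prod_{i=1}^n(1+i-b_i)$, and the run/permutation bijection then \emph{yields} $|S_b| = \prod_{i=1}^n(1+i-b_i)$ as a corollary; phrasing it this way makes the proof fully self-contained and gives an independent derivation of (\ref{restvecformula}), which the paper itself only quotes from Hanlon \cite{Hanlon}. Your closing remark about what fails without monotonicity is also accurate: with a non-monotone $b$ the count of surviving admissible values depends on earlier choices, the run probabilities are no longer equal, and the output distribution is generally non-uniform.
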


\forceindent Using this algorithm, we produced simulations of uniformly random $b_r$-regular permutations and obtained the histograms for number of any fixed size cycles. For large $n$ they seem to follow the Normal distribution as we expected after establishing the result for the $b_2$ case. We present the number of fixed points for $r=3$ as an example, Figure \ref{fig:r3n1000}.

\begin{figure}[h]
  \includegraphics[width=5in]{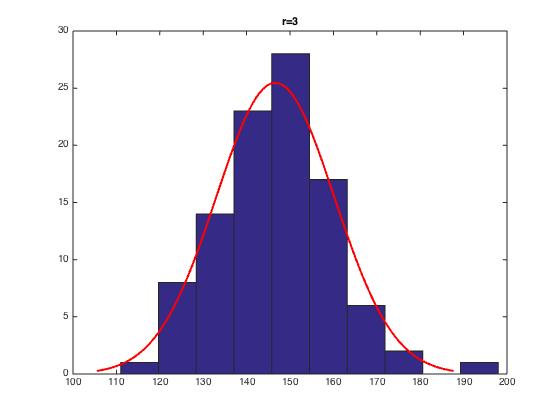}
  \caption{$n = 1000$}
  \label{fig:r3n1000}
\end{figure}

\forceindent Using Proposition 2.2, we can calculate the expected number of fixed points in the $r=3$ case to be $\frac{4n+14}{27}$, which is equal to $148.67$ when $n=1000$. We observe that the histogram is centered around this value with an approximately Normal distribution. Similar simulations encourage us to pursue the proposed central limit theorems. Despite the fact that local dependence structure is more complicated, we believe the following conjecture is true.

\begin{Conj}\label{localdep}

For $b_r$-regular permutations, any two cycle indicators are independent if and only if they are $r-1$-separated, that is, these two indicators have a distance of at least $r-1$ between the numbers their cycles include (recalling that when $r=2$, cycle indicators were $1$-separated).

\end{Conj}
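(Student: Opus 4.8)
The plan is to reduce everything to a single clean counting formula and then read off both directions of the equivalence from it. The starting point is that conditioning on a prescribed cycle $c$ of a $b_r$-regular permutation simply deletes the element set $E_c$ of $c$ (positions and values coincide, since $c$ is a cycle): the number of $b_r$-regular permutations having $c$ as a cycle equals the number of band-regular permutations of the residual ground set $[n]\setminus E_c$, provided $c$ is itself feasible. Iterating the reduction of Remark \ref{vectorreduction} over a whole deleted block, the residual restriction is again left-justified, so \eqref{restvecformula} applies, and a short computation collapses the resulting product to
\[
N(S)\;=\;\prod_{e\in[n]\setminus S} w_S(e),\qquad w_S(e)\;=\;\bigl|\,[e-r+1,\,e]\cap([n]\setminus S)\,\bigr|,
\]
the number of surviving elements in the length-$r$ window ending at $e$. (For $S=\emptyset$ this recovers $|S_{b_r}|=r!\,r^{\,n-r}$.) Writing $\rho(S)=N(S)/|S_{b_r}|$, one has $P(I_c=1)=\rho(E_c)$ and $P(I_c=1,I_d=1)=\rho(E_c\cup E_d)$; in particular these probabilities depend only on the element sets, not on the cyclic order, so the whole question becomes one about $\rho$.

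For the forward direction I would compute the logarithm of
\[
R \;=\; \frac{N(E_c\cup E_d)\,N(\emptyset)}{N(E_c)\,N(E_d)}\;=\;\frac{\rho(E_c\cup E_d)}{\rho(E_c)\,\rho(E_d)},
\]
organized term by term over $e\in[n]$. Each weight $w_S(e)$ depends only on $S\cap[e-r+1,e]$, so writing $A=E_c$, $B=E_d$ with window counts $\alpha=|A\cap W_e|$, $\beta=|B\cap W_e|$ and $w=|W_e|$, every term cancels unless $W_e$ meets both $A$ and $B$; the surviving contribution is $\log\frac{(w-\alpha-\beta)w}{(w-\alpha)(w-\beta)}$ on indices outside $A\cup B$ and $\log\frac{w}{w-\beta}$, $\log\frac{w}{w-\alpha}$ on $A$, $B$ respectively. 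Since a window is a block of $r$ consecutive integers, it can meet both sets only when some $a\in A$, $b\in B$ satisfy $|a-b|\le r-1$. Thus if $c$ and $d$ are $(r-1)$-distant in the sense of the $p$-distance defined earlier, no window straddles both, $R=1$, and the indicators are independent. Equivalently, $\rho$ is multiplicative over element sets separated by a gap of at least $r$, a renewal/factorization property that also lets me reduce the converse to the case where $E_c\cup E_d$ forms a single cluster within which the two sets lie at distance at most $r-1$.

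The converse is where I expect the real difficulty. When $\mathrm{dist}(E_c,E_d)\le r-1$ there are straddling windows, so $R\neq 1$ must be proved rather than inherited from cancellation, and the surviving terms genuinely carry mixed signs: the $A$- and $B$-indexed terms push $\log R$ up while the straddling interior terms push it down (adjacent fixed points give $R=2$, whereas a configuration with a single straddling window gives $R<1$). Hence one cannot bound each factor in a fixed direction, and $\log N$ is neither sub- nor supermodular. The crux is therefore to rule out an exact multiplicative coincidence among the integer window-counts $w$, $w-\alpha$, $w-\beta$, $w-\alpha-\beta$ in $R$. I would attack this by isolating the windows around the closest approach of the two element sets, where one extremal window contributes a factor whose numerator and denominator differ by a controlled amount, and arguing by a prime-valuation or extremality estimate that the full product cannot equal $1$; an alternative is a transfer-matrix formulation in which the length-$(r-1)$ window is the state, so that failure of the state to regenerate between the supports of $c$ and $d$ forces a local conditional probability to change under conditioning on $I_c$, exhibiting dependence directly without evaluating $R$. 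Carrying either argument through uniformly in $r$, together with the degenerate case where $c$ and $d$ cannot coexist at all ($N(E_c\cup E_d)=0$ while $N(E_c),N(E_d)>0$), is the main obstacle.
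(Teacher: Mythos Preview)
This statement is a \emph{conjecture} in the paper, not a theorem: it appears in the Future Work section, is introduced with ``we believe the following conjecture is true,'' and the paper supplies no proof. There is therefore nothing on the paper's side to compare your proposal against.

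As for your proposal itself, you are candid that it is incomplete. Your forward direction is plausible: the window-count formula $N(S)=\prod_{e\notin S}w_S(e)$ does recover $|S_{b_r}|=r!\,r^{n-r}$ and makes the factorization over $(r{-}1)$-separated sets transparent, since no length-$r$ window can straddle both. But the converse is exactly where you admit the argument stalls. You correctly identify that the surviving log-terms have mixed signs, so no monotonicity argument will work, and you then offer two alternative attacks (prime-valuation on the integer ratio, or a transfer-matrix non-regeneration argument) without carrying either one out. Saying ``carrying either argument through uniformly in $r$ \ldots\ is the main obstacle'' is an honest description of a gap, not a proof. One further issue you should flag: even the forward direction needs the hypothesis that both $c$ and $d$ are individually \emph{feasible} cycles of some $b_r$-regular permutation (so that $N(E_c),N(E_d)>0$ and the conditioning makes sense); your reduction ``conditioning on $c$ deletes $E_c$'' presumes this, and for general $r$ the characterization of feasible cycles is itself nontrivial compared to the $r=2$ case handled by Remark~\ref{cycleremark}.

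In short: the paper does not prove this, and neither do you; what you have is a reasonable framework for the easy direction and an outline of where the hard direction gets stuck.
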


\forceindent Another important step is the calculation of moments. Permanent calculations is viable for small cycle sizes and lower moments, but does not have the generality and flexibility of a cycle-index. We used the bijection between $b_2$-regular permutations and compositions to establish the cycle index. A similar bijection would also help in the construction of the cycle index for general $b_r$-regular permutations. For insance, in the case $r=3$, we observe that the number of $b_3$-regular permutations and the number of compositions of $n-1$ with every part potentially having two different colours are equal. One way to check if a given sequence of positive integers corresponds to the sequence of numbers of a certain type of coloured compositions is using the method presented by Abrate et al \cite{Abrateetal}, which confirms our finding. 

\forceindent For $b_2$-regular permutations, each permutation was mapped to composition that uniquely corresponded to the permutation's cycle structure. For instance, the composition $(4,2,1,3)$ was mapped to a permutation with the cycle structure $4-2-1-3$. This worked out well, since in $b_2$ regular permutations every cycle structure had multiplicity one. For $r > 2$, these multiplicities are potentially much higher than one, which introduces an extra layer of complexity. Unlike what happened with the $b_2$-regular permutations, the coloured compositions are not unique in terms of the cycle structures they are mapped to. We need to find the bijection and map permutations to coloured compositions, and then map these coloured compositions to cycle types in a way that would count their multiplicities.

\forceindent Another interesting observation regarding the bijection between $b_2$-regular permutations and compositions was that it preserved the structure of the random walks over these objects. Considering the random transposition walk over the $b_2$-regular permutations, any two $b_2$-regular permutations that were transposition neighbors, were mapped to two compositions that were neighbors in the walk defined over the set of compositions. In this walk, we either glue two adjacent parts together, or rip apart one part (greater than $1$) into any two possible pieces. For $b_3$-regular permutations we were able to identify a similar random walk that was preserved. In this case the random walk over the colored compositions with each part potentially having two colours is similar to random walk over the regular compositions, with the exception that ``black'' parts act in an ``even nature'' and ``white'' parts act in an ``odd nature'', without loss of generality. For example, a black $1$ and a black $3$ can be composed into a black $4$, but a white $2$ and a black $2$ can be combined in a white $4$. Decomposing works similarly; a black part might be decomposed into two smaller black parts or two smaller white parts, whereas a white part has to be decomposed into smaller parts with different colours. The adjacency graphs in this random walk over two coloured compositions and $b_3$-regular permutations are equivalent. We believe this is the key in constructing the bijection.

\forceindent Lastly, an alternative method of proving central limit theorems is the method of moments. This method is particularly useful when we don't necessarily have a decent grasp on the dependence structure, but have much information on the moments of the subject variable. For instance if were to obtain the cycle index for $b_r$-regular permutations, while not being able to prove Conjecture \ref{localdep}, method of moments would be the most viable.% Proving Theorem \ref{r=2clt} using this method would help identify a potential way to prove similar limiting distribution results for many permutation statistics.

\vspace{0.8in}


\begin{thebibliography}{99} 

\bibitem{Abrateetal} Marco Abrate, Stefano Barbero, Umberto Cerruti, Nadir Murru: \textit{Colored compositions, Invert operator and elegant compositions with the ``black tie''}, \begin{verbatim} Discrete Mathematics Volume 335, Pages 1-7 (2014) \end{verbatim}

\bibitem{ArratiaTavare} Richard Arratia, Simon Tavare: \textit{The Cycle Structure of Random Permutations}, \begin{verbatim}  The Annals of Probability, Vol. 20, No. 3, 1567-1591 (1992) \end{verbatim}

\bibitem{Baltic} Vladimir Baltic: \textit{On the Number of Certain Types of Strongly Restricted Permutations}, \begin{verbatim} Applicable Analysis and Discrete Mathematics, 4, 119-135 (2010) \end{verbatim}

\bibitem{Chen} Louis H. Y. Chen: \textit{Two central limit problems for dependent random variables}, \begin{verbatim} Z. Wahrsch. Verw. Gebiete. 43, 223-243 (1978) \end{verbatim}

\bibitem{ChenGoldsteinShao} Louis H. Y. Chen, Larry Goldstein, Qi-Man Shao: \textit{Normal Approximation by Stein's Method}, \begin{verbatim} Probability and Its Applications, Springer, Heidelberg (2011) \end{verbatim}

\bibitem{DiaconisGrahamHolmes} Persi Diaconis; Ronald Graham; Susan P. Holmes: \textit{Statistical problems involving permutations with restricted positions}, \begin{verbatim} IMS Lecture Notes--Monograph Series, Volume 36 (2001) \end{verbatim}

\bibitem{EfronPetrosian1} Bradley Efron; Vahe Petrosian: \textit{A Simple Test of Independence for Truncated Data with Applications to Red-Shift Surveys}, \begin{verbatim} Astrophysical Journal, 399, 345-354 (1992) \end{verbatim}

\bibitem{Feller} William Feller: \textit{ An Introduction to Probability Theory and Its Applications 1}, \begin{verbatim} Wiley, New York (1968) \end{verbatim}

\bibitem{Hanlon} Phil Hanlon: \textit{A Random Walk on the Rook Placements on a Ferrers Board}, \begin{verbatim} The Electronic Journal of Combinatorics, Volume 3 (1996) \end{verbatim}

\bibitem{JerrumSinclairVigoda} Mark Jerrum; Alistair Sinclair; E Vigoda: \textit{A Polynomial-Time Approximation Algorithm for the Permanent of a Matrix with Nonnegative Entries}, \begin{verbatim} Journal of the ACM, Vol. 51, No. 4, pp. 671-697 (2004) \end{verbatim}

\bibitem{Klove} Torleiv Klove: \textit{Generating Functions for the Number of Permutations with Limited Displacement}, \begin{verbatim} The Electronic Journal of Combinatorics 16, R104 (2009) \end{verbatim}

\bibitem{Ross} Nathan Ross: \textit{Fundamentals of Stein's Method}, \begin{verbatim} Probability Surveys Volume 8, 210-293 (2011) \end{verbatim}

\bibitem{Stein} Charles M. Stein: \textit{A bound for the error in the normal approximation to the distribution of a sum of dependent random variables}, \begin{verbatim} Proceedings of 6th Berkeley Symp. on Math. Stat. and Prob., Volume 2 (1972) \end{verbatim}  

\bibitem{Valiant} Leslie Gabriel Valiant: \textit{The Complexity of Computing the Permanent}, \begin{verbatim} Theoretical Computer Science 8, 189-201 (1979) \end{verbatim}


\end{thebibliography}
\end{document}